\numberwithin{equation}{section}
\newtheorem{theorem}{Theorem}[section]
\newtheorem{lemma}[theorem]{Lemma}
\newtheorem{assumption}[theorem]{Assumption}
\newtheorem{corollary}[theorem]{Corollary}
\newtheorem{proposition}[theorem]{Proposition}
\newtheorem{definition}[theorem]{Definition}
\newtheorem{remark}[theorem]{Remark}
\begin{document}

\title[A Framework for Analysis of DEC Approximations]{A Framework for Analysis of DEC Approximations to Hodge-Laplacian Problems using Generalized Whitney Forms}

\def\JG{Johnny Guzm\'an}
\def\JGa{Division of Applied Mathematics,
Brown University,
Box F,
182 George Street,
Providence, RI 02912, USA}

\def\PP{Pratyush Potu}
\def\PPa{Division of Applied Mathematics,
Brown University,
Box F,
182 George Street,
Providence, RI 02912, USA}

\author{\JG, \PP}




\begin{abstract}
We provide a framework for interpreting Discrete Exterior Calculus (DEC) numerical schemes in terms of Finite Element Exterior Calculus (FEEC). We demonstrate the equivalence of cochains on primal and dual meshes with Whitney and generalized Whitney forms which allows us to analyze DEC approximations using tools from FEEC. We demonstrate the applicability of our framework by rigorously proving convergence with rates for the Hodge-Laplacian problem in full $k$-form generality on well-centered meshes. We also provide numerical results illustrating optimality of our derived convergence rates. Moreover, we demonstrate how superconvergence phenomena can be explained in our framework with corresponding numerical results.
\end{abstract}

\maketitle


\section{Introduction}\label{sec:intro}
The Discrete Exterior Calculus (DEC) of \cite{Desbrun05, Hirani03} provides discrete analogues to various tools from the calculus of differential forms on smooth manifolds. The fundamental objects in DEC consist of discrete versions of differential forms, exterior derivatives, and Hodge stars which are defined over simplicial partitions (meshes) of smooth manifolds. A key aspect of DEC is the use of a dual mesh to define these fundamental objects. One can then construct a variety of differential geometric operations and operators including but not limited to discrete wedge products, Lie derivatives, and Laplace-Beltrami and Hodge-Laplacian operators \cite{Hirani03}. In this sense, DEC is one method of constructing a theory of discrete differential geometry (see \cite{Crane17} for a more comprehensive overview of discrete differential geometry). This has lead naturally to applications in digital geometry and computer graphics \cite{Crane13, Desbrun03}.

On the other hand, the expression of partial differential equations (PDEs) in the language of exterior calculus and differential forms is well-known. Hence, the objects in DEC have also found use in approximation schemes for PDEs. For instance, applications in fluid mechanics \cite{Griebel17, Hirani03, Mohamed16, Nitschke17, Pavlov11, Wang23}, elasticity \cite{Yavari08}, electromagnetism \cite{Stern15}, and more recently multiphysics simulations \cite{Morris24} have been realized. 

Notably, the convergence of DEC approximations for PDEs have been observed in numerical experiments (e.g. in \cite{Hirani15, Mohamed16, Mohamed18, Nitschke17, Schulz20}). However, rigorous analysis and proof of this convergence behavior is comparatively lacking. To this end, one strategy is the comparison or reinterpretation of DEC based schemes with other structure preserving numerical methods. For example, in \cite{Hildebrandt06}, a DEC scheme for the Poisson problem in two dimensions is equated with a Finite Element Method and in \cite{Griebel17}, modified DEC schemes are equated with Finite Volume and Finite Difference Schemes. Conversely, it was shown in \cite{Eldred22} that TRiSK-type schemes for shallow water equations can be reinterpreted as DEC based schemes.

A natural PDE to consider in smooth exterior calculus is that of the Hodge-Laplacian. In fact, many of the PDEs for which DEC has been applied to are special cases of the Hodge-Laplacian problem. Thus, a convergence analysis of DEC based approximations for the Hodge-Laplacian problem is particularly desirable. In this direction, the most significant result is \cite{Schulz20} where the analysis is based on the development of an appropriate discrete framework for understanding convergence in arbitrary dimension. However, the convergence results are only realized for the 0-form Hodge-Laplacian (Poisson problem). 
We note that at the same time as this work was in review, convergence of the DEC scheme for the Hodge-Laplacian problem in 2 dimensions for all $k$-forms was independently proven in \cite{Zhu2025} by showing the solution from the DEC scheme converges to the solution of the FEEC scheme which in turn is known to converge to the solution of the continuous problem (see e.g. \cite{Arnold2006}). This strategy is noticeably different from the analysis in this work.

The primary contribution of this paper is to provide a novel framework for analysis of DEC approximations to Hodge-Laplacian problems which allows for rigorous proofs of convergence with optimal rates. It should be noted that we only consider \emph{well-centered} meshes in this work. While constructing such meshes may be difficult in practice, algorithms to produce well-centered meshes have been studied \cite{Vanderzee09}. A key ingredient in this framework is the existence of complexes of differential forms on cellular meshes developed in \cite{Christiansen08, Christiansen09} and \cite[Section 5]{Christiansen11} which allows us to analyze Hodge-Laplacian problems for general differential $k$-forms in arbitrary dimension $n$. Moreover, the framework also provides conditions for superconvergence phenonemona on certain symmetric meshes which has been observed numerically. We provide numerical experiments which realize both the optimal convergence rates and superconvergence.

We remark that the generalized Whitney forms are not necessary to analyze the DEC scheme. One can perform the entirety of the analysis working over cochains by replacing appropriate constructions with their cochain counterparts. Nevertheless, we opt for a differential form-based presentation to showcase connections with the well-established FEEC framework and use corresponding methodologies. Our hope is that this approach naturally reveals deeper connections and opens avenues for future analysis.

The paper is organized as follows. We briefly introduce the continuous level setting of differential forms as well as the Hodge-Laplacian problem in Section \ref{sec:continuous_setting}. We then introduce the discrete setting along with the standard construction of DEC and the DEC scheme in Section \ref{sec:cochain_setting}. In Section \ref{sec:WhitneyForms}, we reformulate DEC and in particular, the DEC scheme in terms of Whitney/Generalized Whitney forms. We then introduce useful tools and establish relevant results in Section \ref{preliminarytools} before finally stating and proving our main result on error estimates for the DEC approximation to the Hodge-Laplacian problem in Section \ref{sec:StabilityErrorEstimates}. We detail how our framework also allows for proofs of superconvergence under symmetry in Section \ref{sec:superconvergence}. We conclude with numerical experiments in Section \ref{sec:numerics}.

\section{Continuous setting}\label{sec:continuous_setting}

In this section, we provide a brief overview of differential forms, the Hodge star operation, exterior derivative, and codifferential. We also make note of the relevant infinite dimensional spaces of differential forms. We conclude by introducing the Hodge-Laplacian and the relevant problem.

We let $\Omega \subset \mathbb{R}^n$ be a bounded Lipschitz polytopal domain. We do not assume $\Omega$ is contractible so the spaces of harmonic forms may be nontrivial. We denote by $\La^k(\Omega)$, the space of differential $k$-forms with smooth coefficients on $\Omega$. The larger space allowing $L^2$ coefficients is denoted by $L^2 \La^k(\Omega)$ and similarly $H^{\ell} \La^k(\Omega)$ denotes the space of $k$-forms with coefficients in the Sobolev space $H^{\ell}(\Omega)$. The exterior derivative, denoted by $d^k$, maps $\La^k (\Omega) \rightarrow \La^{k+1}(\Omega)$ and extends more generally to the less regular spaces. We define the spaces
\begin{equation*}
H\La^k(\Omega)=\{\, v \in L^2 \La^k(\Omega)\,:\, d^k v \in L^2\La^{k-1}(\Omega)\,\},
\quad \mathring{H}\La^k(\Omega)=\{\, v \in H\La^k(\Omega)\,:\, \tr_{\partial \Omega} v=0\,\},
\end{equation*}
where $\tr$ is the trace operator. We may omit the denotation of $\Omega$ when this is clear from context.

Now, we let $\{e_1,\dots, e_n\}$ be the canonical basis of $\R^n$, and refer to the dual basis of $\{e_1,\dots, e_n\}$ as $\{dx_1,\dots,dx_n\}$. We denote the volume form by $\vol = dx_1\wedge dx_2 \wedge \dots \wedge dx_n$.

Let $\Sigma(k,n)$ be the set of strictly increasing maps from $\{1,\dots, k\}$ to $\{1,\dots, n\}$. That is, an element of $\Sigma(k,n)$ is of the form $I=(i_1, i_2, \dots, i_k)$ where $i_1,i_2,\dots,i_k$ are integers such that $1\leq i_1<i_2<\cdots<i_k\leq n$. We use the notation $dx_I = dx_{i_1}\wedge dx_{i_2}\wedge \cdots \wedge dx_{i_k}$ when $I = (i_1,i_2,\dots,i_k)$.

Now, if $\omega, \rho\in \Lambda^k(\Omega)$, we can write $\omega(z) = \sum_{I\in\Sigma(k,n)} a_I(z) dx_I$, $\rho(z)=\sum_{I\in \Sigma(k,n)}b_I(z)dx_I$, and the inner product between $k$-forms at a point $z\in\Omega$ is given by 
\begin{equation*}
    \llangle \omega(z), \rho(z)\rrangle = \sum_{I\in\Sigma(k,n)} a_I(z)b_I(z).
\end{equation*}
The Hodge star operator $\star$ maps $L^2\Lambda^k$ isomorphically onto $L^2\Lambda^{n-k}$ for each $k$. We fix an orientation on $\mathbb{R}^n$ so that we have the relation:
\begin{equation}\label{orientationrelation}
    (\omega \wedge \rho) (z) = \llangle \star\omega(z), \rho(z)\rrangle \vol \qquad \forall \rho\in \La^{n-k}(\Omega), z\in\Omega.
\end{equation}

We define the formal adjoint of $d^{k-1}$ by:
\begin{equation*}
 \de_k \omega= (-1)^k \star^{-1} d^{n-k}( \star \omega) \qquad \text{for } \omega \in \La^k(\Omega),
\end{equation*}
and the spaces
\begin{equation*}
H_{\delta}\La^k(\Omega)=\{\, v \in L^2 \La^k(\Omega)\,:\, \delta_k v \in L^2\La^{k-1}(\omega)\,\},
\quad \mathring{H}_{\delta}\La^k(\Omega)=\{\, v \in H_{\delta}\La^k(\Omega)\,:\, \text{tr}_{\partial \Omega} \star v=0\,\}.
\end{equation*}
The adjoint relation between $d$ and $\delta$  may be expressed as
\begin{equation}\label{intparts}
( d^{k-1} \omega, \rho)_{L^2 \La^{k}(\Omega)}= ( \omega, \de_{k} \rho)_{L^2 \La^{k-1}(\Omega)} \qquad \text{ for } \omega \in H\La^{k-1}(\Omega),\, \rho\in  \mathring{H}_{\delta} \La^{k}(\Omega),
\end{equation}
where $(\, \cdot\,,\, \cdot\,)_{L^2 \La^{k}(\Omega)}$ is the $L^2 \La^{k}(\Omega)$ inner product. We simply write $(\, \cdot\, ,\, \cdot \,)$ when the domain $\Omega$ is understood from the context.  We let $\| \omega\|_{L^2(\Omega)}^2= ( \omega,\omega)$ for $\omega\in L^2\La^{k}(\Omega)$. 

We also define the $H\La^k(\Omega)$ inner product as
\begin{equation*}
    (u,v)_{H_d}:=(u,v)+(du,dv),
\end{equation*}
with corresponding norm $\|\cdot\|_{H_d}$.

Moreover, for $\omega = \sum_{I\in\Sigma(k,n)}a_Idx_I\in H^\ell\La^{k}(\Omega)$, we let 
\begin{equation*}
    |\omega|_{H^j(\Omega)}^2:= \sum_{I\in\Sigma(k,n)} |a_I|_{H^j(\Omega)}^2, \qquad \| \omega\|_{H^\ell(\Omega)}^2:= \sum_{j=0}^\ell |\omega|_{H^j(\Omega)}^2
\end{equation*}
where $|\cdot|_{H^j(\Omega)}$ is the standard $H^j$ seminorm. Note that the Hodge star operator is an isometry from $L^2\La^k$ to $L^2\La^{n-k}$. Moreover, with these norms as defined, one readily sees that the Hodge star operator is an isometry from $H^\ell\La^k$ to $H^\ell\La^{n-k}$ for any $\ell$.

We define the space of harmonic forms as
\begin{equation*}
    \Hh^k=\{v\in H\La^k(\Omega)\cap \mathring{H}_{\delta}\La^k(\Omega): d^k v = 0,\, \delta_k v = 0\},
\end{equation*}
and let $P_{\Hh^k} u$ denote the $L^2\La^k$-orthogonal projection of $u$ onto $\Hh^k$ which satisfies
\begin{equation*}
    (P_{\Hh^k} u, v) = (u,v) \quad \forall v\in \Hh^k.
\end{equation*}

Following \cite{Arnold2006}, it will be useful to define the space
\begin{equation*}
    (\Zz H\La^k)^{\perp}:=\{u\in H\La^k(\Omega): (u,v) =0 \:\:\forall v\in H\La^k(\Omega) \text{ such that }dv = 0\},
\end{equation*}
and note that any $u\in L^2\La^k(\Omega)$ has the Hodge decomposition \cite[Section 2, Section 7.2]{Arnold2006}:
\begin{equation}\label{hodgedecomposition}
    u = u_d + u_\delta + u_{\Hh}.
\end{equation}
where $u_d\in d(H\La^{k-1}(\Omega))$, $u_\delta\in \delta (\mathring{H}_\delta\La^{k+1}(\Omega))$, and $u_{\Hh}\in\Hh^k$.

Now, the Hodge-Laplacian $\Delta^k$ is defined as
\begin{equation*}
    \Delta^k = \delta_{k+1} d^k + d^{k-1}\delta_k.
\end{equation*}

The problem of interest for us is the following.
\begin{subequations}\label{continuousstrongform}
    \begin{alignat}{2}
  \Delta^k u&=  f-P_{\Hh^k}f \quad &&\text{ in } \Omega,  \label{csf}\\
  P_{\Hh^k} u &= 0 \quad &&\text{ in } \Omega, \label{csfc}\\
  \tr \star u&=0 \quad &&\text{ on } \partial \Omega, \label{csfbc1}\\
  \tr  \star d u&=0 \quad &&\text{ on } \partial \Omega \label{csfbc2}.
    \end{alignat}
\end{subequations}
We will see that this is the natural problem to approximate using DEC. In particular, the problem \eqref{continuousstrongform} is indeed a well-posed problem \cite[Theorem 3.1, Section 3.2.1]{Arnold10}.

Finally, note that we can let $\rho = \delta_k u$ and $p=P_{\Hh}f$ to rewrite \eqref{csf} and \eqref{csfc} as
\begin{subequations}\label{continuousmixedform}
\begin{alignat}{1}
\rho-\delta_k u&= 0, \label{cmf1}\\
d^{k-1} \rho+ \delta_{k+1} d^k u+p&=f, \label{cmf2}\\
P_{\Hh^k} u &= 0.
\end{alignat}
\end{subequations}

\section{Discrete setting and DEC}\label{sec:cochain_setting}
In this section, we introduce the primal mesh which will be a simplicial complex. We also discuss polyhedral cell complexes and the dual mesh, and introduce the standard elements of DEC in terms of chains and cochains. A more detailed overview of DEC can be found in \cite{Desbrun05, Hirani03}.

\subsection{Primal mesh}
Let $\Th$ be an $n$-simplicial conforming mesh of $\Omega$ \cite[Definition 1.55]{ErnGuermond04}. We assume the shape regularity condition
\begin{equation*}
\frac{h_{\sig}}{\mu_{\sig}} \le C_S, \quad \forall \sig \in \Th,
\end{equation*}
where $\mu_{\sig}$ is the diameter of the largest inscribed ball in $\sig$, $h_{\sig}$ is the diameter of $\sigma$, and $C_S>0$ is the shape regularity constant. Associated with the mesh $\Th$ is the simplicial complex $\del(\Th)$ consisting of all the simplices of $\Th$ and all their subsimplices of dimension $0$ through $n$. We also use the notation $\del$ when the mesh $\Th$ is clear. We denote by $\del_k(\Th)$, or simply $\del_k$ when the mesh $\Th$ is clear, the collection of all the simplices in  $\del(\Th)$  of dimension $k$. We let $h:=\max_{\sig\in\Delta}h_\sig$. If $x_0,\ldots,x_k\in\mathbb{R}^n$ are the vertices of $\sig\in\del_k$, we may write $[x_0,\ldots,x_k]$ for $\sig$, the closed convex hull of the vertices.

To be precise a simplicial complex $\del$ is a set of simplices such that every subsimplex of a simplex of $\del$ is in $\del$ and the intersection of any two simplices in $\del$ is a subsimplex of both of them \cite[Section 2]{Munkres84}. Hence, one can see that $\del(\Th)$ is indeed a simplicial complex because $\Th$ is a conforming mesh.

We will need to consider orientations of simplices. This is a choice of ordering of the vertices with two orders differing by an even permutation giving the same orientation.  If we select an ordering of all the vertices of $\Th$, this implies a default orientation for each of the simplices in $\del(\Th)$. We let $x_1,\dots, x_N$ be such an enumeration of the vertices of $\Th$, and henceforth let $\del(\Th)$ (resp. $\del_k(\Th)$) consist of all simplices (resp. $k$-simplices) with their default orientations.


Associated with the simplicial complex are a chain complex
and co-chain complex.  The space of $k$-chains is the vector space
\begin{equation*}
\C_k:= \left\{\, \sum_{ \sig \in \del_k} a_{\sig} \sigma\,:\,  a_\sig \in \mathbb{R} \,\right\},
\end{equation*}
where $\sig$ is given the default orientation and the same simplex with the opposite
orientation is identified with $-\sig$. 

It will be useful to introduce the following sets of oriented simplices for any $\sig\in\del_k$. For $ \sigma= [x_{i_0}, x_{i_1}, \ldots, x_{i_k}] \in \Delta_k$, we let 
\begin{alignat*}{1}
\mathcal{A}(\sigma)=&\{ \tau \in \Delta_{k+1}: \tau =[x, x_{i_0}, \ldots, x_{i_k}], x \in \Delta_0 \},\\    
\mathcal{B}(\sigma)=& \{ (-1)^j [ x_{i_0}, \ldots, \widehat{x_{i_{j}}}, \ldots, x_{i_k}]:  0 \le j \le k \},
\end{alignat*}
 where $\widehat{\cdot}$ indicates omission. In other words, $\mathcal{A}(\sigma)$ consists of all the $(k+1)$-simplices that have $\sigma$ as part of its boundary with a given orientation, and $\mathcal{B}(\sigma)$ is the collection of $(k-1)$-simplices which appear on the boundary of $\sigma$ with the appropriate orientation. 
 \begin{remark}\label{rem:A_B_relation}
      The two sets are related in that $\sigma \in \mathcal{B}(\tau)$ if and only if $\tau \in \mathcal{A}(\sigma)$. 
 \end{remark}
 
The boundary map $\partial_k:  \C_k \rightarrow \C_{k-1}$ is defined for $\sigma= [x_{i_0}, \ldots, x_{i_k}] \in \del_k$ by
\begin{equation}\label{boundarydef}
\partial_k \sigma= \sum_{\tau\in\mathcal{B}(\sigma)} \tau = \sum_{j=0}^k (-1)^j [x_{i_{0}}, \ldots,\widehat{x_{i_{j}}}, \ldots, x_{i_{k}}].
\end{equation}

The dual space of $\C_k$ is the space $\C^k$ of cochains. The coboundary operator $\dd^k: \C^k \rightarrow \C^{k+1}$ is defined by duality following the relation:
\begin{equation}\label{defdd}
\bl \dd^k \mathsf{w}, \tau \br =\bl \mathsf{w}, \partial_{k+1} \tau \br, \quad \tau \in \C_{k+1}, \mathsf{w} \in \C^k,
\end{equation}
where $\bl \cdot, \cdot\br$ is the duality pairing.

As a consequence of Remark \ref{rem:A_B_relation}, we have that for any $\mathsf{v}\in\C^k$ and $\mathsf{w}\in\C^{k+1}$, it holds
\begin{equation}\label{commutesum}
    \sum_{ \sigma \in \Delta_k} \sum_{\tau \in \mathcal{A}(\sigma)} \bl\mathsf{v}, \sigma\br \bl \mathsf{w}, \tau \br =  \sum_{\tau \in \Delta_{k+1}}  \sum_{\sigma \in \mathcal{B}(\tau)} \bl\mathsf{v}, \sigma\br \bl \mathsf{w}, \tau \br.
\end{equation}

\subsection{Dual mesh}\label{sec:dual_mesh}
A fundamental aspect of DEC is the so called dual mesh. Recall that the primal mesh $\Th$ has the associated simplicial complex $\Delta$. However, for the dual mesh, we will see that the associated complex will no longer be simplicial. Hence, we first discuss polyhedral cell complexes before turning to the specifics of the dual mesh.

\begin{definition}[Polyhedral cells]\label{def:polyhedralcell}
    A polyhedral $k$-cell is a subset of $\R^n$ which can be simplicially partitioned into a finite simplicial complex of dimension $k$ and is homeomorphic (there exists a continuous bijection with continuous inverse) to the closed unit ball of $\R^k$. 
\end{definition}

\begin{figure}[H]
\centering
\begin{subfigure}[b]{0.4\textwidth}
\centering
\begin{tikzpicture}[scale=1.4]

  \draw[->] (0,0) -- (2,0) node[below] {};
  \draw[->] (0,0) -- (0,2) node[left]  {};

  \coordinate (H) at (0.5,0.5);

  \coordinate (A) at (1.2,0.2);
  \coordinate (B) at (0.4,1.4);

  \draw[very thick] (H) -- (A) node[midway, above right]  {$\tau_1$};
  \draw[very thick] (H) -- (B) node[midway, above right]   {$\tau_2$};

\end{tikzpicture}
\caption{A single polyhedral 1-cell in $\R^2$}
\end{subfigure}
\hspace{0.5cm}
\begin{subfigure}[b]{0.4\textwidth}
\centering

\tdplotsetmaincoords{65}{111}

\begin{tikzpicture}[tdplot_main_coords, scale=1.4]

  \draw[->,thick] (0,0,0) -- (2,0,0) node[anchor=north east]{};
  \draw[->,thick] (0,0,0) -- (0,2,0) node[anchor=north west]{};
  \draw[->,thick] (0,0,0) -- (0,0,1.5) node[anchor=south]{};

  \begin{scope}[scale=1.5]
    \coordinate (P) at (0.1733, 0.7734, 0.5);
    \coordinate (Q) at (1.2887, 1.0597, 0.8);
    \coordinate (R) at (0.7616, 0.5252, 1.1);
    \coordinate (S) at (0.7624, 1.6789, 1.0);

    \filldraw[fill=blue!20,draw=black,thick]
      (P) -- (Q) -- (R) -- cycle;
    \filldraw[fill=blue!30,draw=black,thick]
      (P) -- (Q) -- (S) -- cycle;

    \coordinate (M)  at ($(P)!0.333!(Q)$);
    \coordinate (C1) at ($(M)!0.333!(R)$);
    \coordinate (C2) at ($(M)!0.333!(S)$);
    \node[font=\small\itshape] at (C1) {$\tau_1$};
    \node[font=\small\itshape] at (C2) {$\tau_2$};
  \end{scope}

\end{tikzpicture}
    \caption{A single polyhedral 2-cell in $\R^3$}
\end{subfigure}
\caption{Examples of our definition of polyhedral cells with explicit simplicial decomposition.}
\label{fig:polytopes}
\end{figure}

\begin{definition}[Polyhedral cell complex]
    A \textit{polyhedral cell complex}, $\delC$, of a domain $\Omega$ is a finite collection of polyhedral cells lying in $\Omega$ such that:
    \begin{enumerate}
        \item $\bigcup_{\sig\in\delC}\sig = \overline{\Omega}$.
        \item If $\sig,\sig'\in\delC$, then $\Int(\sig)\cap\Int(\sig') = \emptyset$.
        \item If $\sig,\sig'\in\delC$, then $\sig\cap \sig'= \bigcup_{\tau\in S} \tau$ for some $S\subset\delC$.
        \item If $\sig\in\delC$, then $\partial\sig= \bigcup_{\tau\in S} \tau$ for some $S\subset\delC$.
    \end{enumerate}
\end{definition}
The polyhedral cell complex defined above is a special case of a cellular complex \cite[Definition 5.1]{Christiansen11} where the cells are polyhedral. Thus, in what follows, we may simply refer to the elements of polyhedral cell complexes as cells. We let $\delC$ be a polyhedral cell complex, and the subset of $\delC$ containing all of its $k$-cells will be denoted $\delC_k$. We now introduce some notions paralleling those introduced in the preceding section.

We define an oriented polyhedral $k$-cell $\sigma$ to be a polyhedral $k$-cell such that its simplicial decomposition (see Definition \ref{def:polyhedralcell}) is consistently oriented: the affine mapping between any two constituent $k$-simplices has a positive Jacobian determinant.
The orientation of $\sigma$ is then given by the orientation of any simplex in its simplicial decomposition. Henceforth, we assume $\delC$ is an oriented polyhedral cell complex. When defining the dual mesh, we will make precise the orientation we choose for the cells of the relevant polyhedral cell complex.

It will be useful to define the subsets of $\delC$ corresponding to boundary and interior cells as $\delCB:=\{\sig\in\delC : \sig\subset\partial\Omega\}$ and $\delCI:=\delC\setminus \delCB$ respectively. Then the subsets of $\delCB$ and $\delCI$ consisting of all of the $k$-cells in $\delCB$ and $\delCI$ will be denoted $\delCB_{k}$ and $\delCI_{k}$ respectively.

We define the space of $k$-chains on the polyhedral cell complex as $\C_{\smp, k}$. As on the simplicial complex, this is the vector space
\begin{equation*}
\C_{\smp, k}:= \left\{\, \sum_{ \sig \in \delC_k} a_{\sig} \sig\,:\,  a_\sig \in \mathbb{R} \,\right\},
\end{equation*}
where $\sig$ has the particular orientation associated with $\delC$, and the same cell with the opposite orientation is identified with $-\sig$. Now, the boundary map, $\partial_k^{\smp}:  \C_{\smp, k} \rightarrow \C_{\smp, k-1}$, is defined for $\sig\in\delC_k$ by
\begin{equation*}
\partial_k^{\smp} \sig = \sum_{\substack{\tau\in \delC_{k-1} \\ \tau\subset\partial\sig}}\epsilon(\sig, \tau) \tau,
\end{equation*}
where $\epsilon$ is defined as follows. Let $\sig\in\delC_k$ and $\tau\in\delC_{k-1}$ such that $\tau\subset \partial \sig$ for $k\geq 1$. Then, if $k=1$, there exists a 1-simplex in the simplicial decomposition of $\sig$ which contains $\tau$, say $\widetilde{\sig}$. If $\widetilde{\sig}=[\tau, \tau']$ for some 0-simplex $\tau'$, we define $\epsilon(\sig, \tau)=-1$, and if $\widetilde{\sig}=[\tau', \tau]$ for some 0-simplex $\tau'$, we define $\epsilon(\sig, \tau)=+1$. For $k>1$, let $p$ be a point on $\tau$ with a well-defined $(k-1)$-tangent plane $T_p$. we define $\epsilon$ as in \cite[Appendix II, section 5]{Whitney57}. Recall that $\sig$'s orientation gives us an equivalence class of collections of $k$ basis vectors at $p$. We can thus choose a member of this class, $(v_1, \dots, v_k)$, such that $v_2, \dots, v_k$ lie in $T_p$. Then, if $v_1$ points out of $\sig$, we let $\epsilon(\sig,\tau)=+1$ and $\epsilon(\sig,\tau)=-1$ otherwise.

The dual space of $\C_{\smp, k}$ is $\C_{\smp}^k$, and the coboundary operator $\dd_{\smp}^k: \C_{\smp}^{k} \rightarrow \C_{\smp}^{k+1}$ is defined by duality as:
\begin{equation}\label{defdd2}
\bl \dd_{\smp}^k \mathsf{w}, \sig \br =\bl \mathsf{w}, \partial_{k+1}^{\smp} \sig \br, \quad \sig\in \delC_{k+1}, \mathsf{w} \in \C_{\smp}^{k}.
\end{equation}

We now turn to defining the dual mesh which is central to the DEC method. 
\begin{definition}[Orthogonal dual mesh]\label{defdualmesh}
    Let $\delC$ be a polyhedral cell complex of $\Omega$ such that for every simplex $\sig\in\del_{n-k}(\Th)$, there exists a unique dual cell $*\sig\in\delCI_{k}$ with the following properties.
\begin{enumerate}
    \item The mapping $(*)$ from $\del_{n-k}$ to $\delCI_k$ is an isomorphism.
    \item $*\sig$ lies completely in a $k$-plane, $P(*\sig)$.
    \item $P(\sig)\perp P(*\sig)$.
    \item Let $v_1, \dots, v_{n-k}$ be a collection of orthonormal vectors in the equivalence class corresponding to the orientation of $\sig$. Then, the orientation of $*\sig$ is such that 
    \begin{equation}\label{dual_orientation0}
        \det[v_1, \dots, v_{n-k}, w_1, \dots, w_k] = 1,
    \end{equation}
    where $w_{1}, \dots, w_{k}$ are orthonormal vectors in the equivalence class corresponding to the orientation of $*\sig$.
    Equivalently,
    \begin{equation}\label{dual_orientation}
        (dv_1\wedge\cdots \wedge dv_{n-k}) \wedge (dw_{1}\wedge \cdots \wedge dw_{k}) = \vol,
    \end{equation}
    where $dv_1, \dots, dv_{n-k}$ and $dw_1,\dots,dw_{k}$ are the dual bases of $v_1, \dots, v_{n-k}$ and $w_{1}, \dots, w_{k}$ respectively.
\end{enumerate}
Then, we say $\Dh:=\delC_n = \{*\sig\}_{\sig\in\del_0(\Th)}$ is an orthogonal dual mesh of $\Th$ and $\delC$ is the dual complex.
\end{definition}
\begin{definition}[Dual set]
    The dual set of $\del$ is exactly the subset of the dual complex $\delC$ consisting of dual cells of primal simplices. This is
\begin{equation*}
    \delC_{*} := \{ * \sigma: \sig \in \del\}.
\end{equation*}
The collection of all of the polyhedral $k$-cells in the dual set is denoted $\delC_{*,k}$.
\end{definition}

In general, it is difficult to satisfy all of the conditions in our assumptions on the orthogonal dual mesh. Hence, more general dual meshes exist in the literature. A natural one to consider is the barycentric dual mesh (e.g. see \cite{Bonelle15}). However, Properties 2 and 3 of Definition \ref{defdualmesh} may not be satisfied in general, and we will see that these properties are key assumptions in the proof of our error estimates in Section \ref{sec:StabilityErrorEstimates}. We can see from Property 1 of Definition \ref{defdualmesh} that $\delC_* = \delCI$. Note that the right hand side of \eqref{dual_orientation0} could either be $1$ or $-1$ due to the orthonormality of the noted vectors, and so we are effectively choosing the orientation of $*\sig$ in the natural way based on the orientation of $\sig$ and the orientation of $\R^n$ as specified in section \ref{sec:continuous_setting}. 

All of the four properties of Definition \ref{defdualmesh} are satisfied if one uses a mesh $\Th$ which is \textit{well-centered} (i.e. the circumcenter of any simplex in $\del$ lies in the interior of the simplex \cite[Definition 2.4.2]{Hirani03}) and one uses the circumcenters to define the dual mesh $\Dh$. This is detailed in \cite{Desbrun05, Hirani03}, and an algorithm to obtain $*\sig$ with the appropriate orientation is detailed in \cite[Remark 2.5.1]{Hirani03}.  

We note that even if the mesh $\Th$ belongs to a collection of well-centered meshes indexed by decreasing mesh size $h$, when discussing convergence, it is entirely possible that dual cells may degenerate (cf. \cite[Remark 2.4]{Zhu2025}) which leads to the DEC scheme becoming unstable. Hence, we need an assumption to disallow this possibility.
\begin{definition}[Uniformly well-centered collection of meshes]
    A collection of meshes $\{\Th\}_h$ is considered \emph{uniformly well-centered} if for any $k$-simplex $\sigma\in \Delta_k(\Th)$ where $k\in\{0, \dots, n\}$ for any $h$, the measure of the dual $(n-k)$-cell, denoted $|*\sig|$, is such that $|*\sig|\geq \beta > 0$ where $\beta$ is independent of $h$.
\end{definition}
\begin{assumption}\label{assump:UWC}
    In the rest of this paper, we assume $\Th$ belongs to a collection of  \emph{uniformly well-centered meshes} and $\Dh$ is the circumcentric dual mesh as detailed in \cite{Desbrun05, Hirani03} so we indeed have all of the properties in Definition \ref{defdualmesh}.
\end{assumption}

Briefly, the construction of the dual mesh in \cite{Desbrun05, Hirani03} is as follows. Let $\sig\in\del_k$. For notational convenience, let $\sig_k=\sig$. Then, there exists an ordered set of simplices $\{\sig, \sig_{k+1}, \dots, \sig_n\}$ such that $\sig_j\subset\sig_{j+1}$ for all $j\in\{k, \dots, n-1\}$ and $\sig_j\in\del_j$ for each $j\in\{k, \dots, n\}$. Let $\mathcal{K}(\sig)$ be the collection of all such ordered sets of simplices. Then, any $S\in \mathcal{K}(\sig)$ can be identified with an $(n-k)$-simplex $\tau_S:=[c(\sig), \dots, c(\sig_n)]$ where $c(\sig)$ denotes the circumcenter of $\sig$. Then, the cell $*\sig$ is exactly the union of all of the $\tau_S$ for $S\in \mathcal{K}(\sig)$ with the orientation noted in Definition \ref{defdualmesh}.

Following Definition \ref{defdualmesh}, the collection $\{*\sig\}_{\sig\in\del_0}$ is $\Dh$, and the associated polyhedral cell complex is $\delC$. We illustrate this oriented dual mesh in Figure \ref{fig:dualcellmesh}.

\begin{figure}[H]
\centering
\begin{subfigure}[b]{0.45\textwidth}
\centering
\begin{tikzpicture}[scale = 3, >={Latex[round]}] 

    \coordinate (A) at (0, 0);
    \coordinate (B) at (1, 0);
    \coordinate (C) at (1.5, 1);
    \coordinate (D) at (0.5, 1);

    \coordinate (C1) at (0.5, 0.375);
    \coordinate (C2) at (1, 0.625);

    \draw (A) -- node[midway,sloped]{\tikz \draw[->] (0,0) -- (0.4,0);} (B);
    \draw (B) -- node[midway,sloped]{\tikz \draw[<-] (0,0) -- (0.4,0);} (D);
    \draw (D) -- node[midway,sloped]{\tikz \draw[<-] (0,0) -- (0.4,0);} (A);
    
    \draw (B) -- node[midway,sloped]{\tikz \draw[->] (0,0) -- (0.4,0);} (C);
    \draw (C) -- node[midway,sloped]{\tikz \draw[<-] (0,0) -- (0.4,0);} (D);
    
    \foreach \p in {A,B,C,D} {
        \fill[black] (\p) circle (0.75pt);
    }

    \node at (C1) {\scalebox{2}{$\circlearrowleft$}}; 
    \node at (C2) {\scalebox{2}{$\circlearrowleft$}}; 

\end{tikzpicture}
\caption{Oriented primal mesh $\Th$}
\end{subfigure}
\hfill
\begin{subfigure}[b]{0.45\textwidth}
\centering

\begin{tikzpicture}[scale = 3]

    \coordinate (A) at (0, 0);
    \coordinate (B) at (1, 0);
    \coordinate (C) at (1.5, 1);
    \coordinate (D) at (0.5, 1);
    
    \coordinate (M1) at (0.5, 0);
    \coordinate (M2) at (0.25, 0.5);
    \coordinate (M3) at (0.75, 0.5);
    \coordinate (M4) at (1, 1);
    \coordinate (M5) at (1.25, 0.5);
    
    \coordinate (C1) at (0.5, 0.375);
    \coordinate (C2) at (1, 0.625);

    \draw (A) -- (M1);
    \draw (A) -- (M2);
    \draw (B) -- (M1);
    \draw (B) -- (M5);
    \draw (C) -- (M5);
    \draw (C) -- (M4);
    \draw (D) -- (M4);
    \draw (D) -- (M2);

    \draw (C1) -- node[midway,sloped]{\tikz \draw[<-] (0,0) -- (0.3,0);} (C2);
    \draw (M2) -- node[midway,sloped]{\tikz \draw[->] (0,0) -- (0.3,0);} (C1);
    \draw (M1) -- node[midway,sloped]{\tikz \draw[->] (0,0) -- (0.3,0);} (C1);
    \draw (M5) -- node[midway,sloped]{\tikz \draw[<-] (0,0) -- (0.3,0);} (C2);
    \draw (M4) -- node[midway,sloped]{\tikz \draw[->] (0,0) -- (0.3,0);} (C2);

    \foreach \p in {C1,C2,M1,M2,M4,M5} {
        \fill[black] (\p) circle (0.75pt);
    }

    \node at (0.85, 0.25) {\scalebox{1.5}{$\circlearrowleft$}};
    \node at (0.275, 0.2) {\scalebox{1.5}{$\circlearrowleft$}};
    \node at (0.65, 0.75) {\scalebox{1.5}{$\circlearrowleft$}};
    \node at (1.185, 0.78) {\scalebox{1.5}{$\circlearrowleft$}};

\end{tikzpicture}
    \caption{Oriented dual mesh $\Dh$}
\end{subfigure}
\caption{2d example of the oriented dual mesh following \cite{Desbrun05, Hirani03}.}
\label{fig:dualcellmesh}
\end{figure}

In fact, a consequence of the construction of the dual mesh and the way we have defined orientation of polyhedral cells is the important identity:
\begin{equation}\label{partialstarsig}
\partial_{k}^{\smp}(*\sigma)=(-1)^{n-k+1} \sum_{\tau \in \mathcal{A}(\sigma)} *\tau+B_\sigma, \quad \forall \sig\in\del_{n-k},
\end{equation}
where $B_\sig$ consists of cells in $\delCB_{k-1}$. 

We can now define the subspace of $\C_{\smp, k}$ corresponding to $\delC_{*}$ as
\begin{equation*}
\C_{\smp, k}^*:= \left\{\, \sum_{ \sig \in \del_{n-k}} a_{\sig} (* \sig)\,:\,  a_\sig \in \mathbb{R} \,\right\}.
\end{equation*}
The dual space of $\C_{\smp, k}^*$ is the space $\C_{\smp}^{*,k}$ of cochains. Note that by Property 1 in Definition \ref{defdualmesh}, $\C_{\smp}^{*,k}$ is exactly the subspace of $\C_{\smp}^k$ with zero boundary conditions (since $\delC_* = \delCI$). That is, for any $\tau\in\delCB_{k}$ and $\mathsf{w}\in \C_{\smp}^{*,k}$, we have that $\bl \mathsf{w}, \tau\br = 0$. Thus, if one is only interested in $\C_{\smp}^{*,k}$, variations of the formula \eqref{partialstarsig} without the $B_\sig$ term can be used as a definition (e.g. \cite[Definition 3.6.8]{Hirani03}, \cite[Definition 2.3]{Schulz20}).

In particular, if we restrict the coboundary operator $\dd_{\smp}^k$ defined in the previous section to $\C_{\smp}^{*,k}$, the boundary term in the formula \eqref{partialstarsig} always vanishes and so we in fact have the complex with boundary conditions:
\begin{alignat}{4}\label{cochainbccomplex}
&\C_{\smp}^{*,0}
&&\stackrel{\dd_{\smp}^0}{\xrightarrow{\hspace*{0.5cm}}}\
\C_{\smp}^{*,1}
&&\stackrel{\dd_{\smp}^1}{\xrightarrow{\hspace*{0.5cm}}}\
\cdots
&&\stackrel{\dd_{\smp}^{n-2}}{\xrightarrow{\hspace*{0.5cm}}}\
\C_{\smp}^{*,n-1}
\stackrel{\dd_{\smp}^{n-1}}{\xrightarrow{\hspace*{0.5cm}}}\
\C_{\smp}^{*,n}.
\end{alignat}

\subsection{DEC and the numerical scheme}\label{sec:DEC_numerical_scheme}
We now explicitly define and list the standard DEC operators relevant to the DEC Scheme for the Hodge-Laplacian. For reference, the Hodge star and codifferential can be found in \cite{Desbrun05, Hirani03}.
\begin{definition}[Discrete exterior derivative on cochains]
    The discrete exterior derivative in DEC consists of the coboundary operators $\dd^k$ and $\dd_{\smp}^k$ defined in \eqref{defdd} and \eqref{defdd2}.
\end{definition}

\begin{definition}[Discrete hodge star on cochains]\label{DiscHodgeStar}
    The discrete Hodge star in DEC is the operator $\ccstar:\C^k \to \C_{\smp}^{*,n-k}$ such that 
\begin{equation}\label{DEChodgestar}
    \frac{1}{|*\sig|} \bl \ccstar \mathsf{w}, *\sig\br  = \frac{1}{|\sig|}\bl \mathsf{w}, \sig\br, \qquad \forall \sig\in\del_k.
\end{equation}
The inverse discrete Hodge star $\ccstar^{-1}:\C_{\smp}^{*,n-k}\to \C^k$ is such that
\begin{equation}\label{DEChodgestarinverse}
    \frac{1}{|\sig|} \bl \ccstar^{-1} \mathsf{w}, \sig\br  = \frac{1}{|*\sig|}\bl \mathsf{w}, *\sig\br, \qquad \forall \sig\in\del_k.
\end{equation}
\end{definition}

\begin{definition}[Discrete codifferential on cochains]\label{DiscCodiff}
    The discrete codifferential in DEC is $\ddelta_{k}: \C^k \to \C^{k-1}$ defined as 
    \begin{equation*}
        \ddelta_k:= (-1)^k \ccstar^{-1} \dd_{\smp}^{n-k} \ccstar.
    \end{equation*}
\end{definition}
We unpack the definition of $\ddelta_k$. For $\mathsf{w}\in\C^k$ and $\sig\in\del_{k-1}$, we have
\begin{alignat}{2}\label{deltah_effect}
    \nonumber \bl \ddelta_k \mathsf{w}, \sig\br &= (-1)^k \frac{|\sig|}{|*\sig|} \bl \ccstar\mathsf{w}, \partial_{n-k+1}^{\smp}(*\sig)\br  && \quad \text{by \eqref{DEChodgestarinverse}, \eqref{defdd2}}\\
    &= \frac{|\sig|}{|*\sig|} \sum_{\tau\in\mathcal{A}(\sig)} \bl \ccstar \mathsf{w}, *\tau\br  && \quad \text{by \eqref{partialstarsig}}\\
    \nonumber&= \frac{|\sig|}{|*\sig|} \sum_{\tau\in\mathcal{A}(\sig)} \frac{|*\tau|}{|\tau|}\bl \mathsf{w}, \tau\br   && \quad \text{by \eqref{DEChodgestar}},
\end{alignat}
where we used that $\ccstar \mathsf{w}\in\C_{\smp}^{*,n-k}$ so $\bl \ccstar \mathsf{w}, \tau'\br = 0$ for all $\tau'\subset\partial\Omega$.

\begin{definition}[Discrete Hodge-Laplacian on cochains]\label{DiscHodgeLap}
    We define the discrete Hodge-Laplacian $\dDelta^k : \C^k \rightarrow \C^k$ as
\begin{equation*}
    \dDelta^k :=  \dd^{k-1} \ddelta_k +\ddelta_{k+1} \dd^k.
\end{equation*}
\end{definition}

\begin{definition}[DEC inner product on cochains]
    We define the DEC inner product on $\C^k$ as
\begin{equation}\label{def:cochain_inner_product}
    (\mathsf{v}, \mathsf{w})_{\C^k}:= \sum_{\sig\in\del_k} a_\sig\bl \mathsf{v}, \sig\br\bl \mathsf{w}, \sig\br\qquad \text{where }a_\sig = \frac{|*\sig|}{|\sig|}.
\end{equation}
\end{definition}
We see that $\ddelta_{k+1}$ is indeed the adjoint of $\dd^{k}$ under this inner product. 
\begin{lemma}[Adjoint property with cochain inner product]
It holds, 
\begin{equation}\label{adjunction_cochain}
  ( \ddelta_{k+1} \mathsf{u}, \mathsf{v})_{\C^k} = ( \mathsf{u}, \dd^k \mathsf{v})_{\C^k}  \qquad \forall \mathsf{u} \in \C^{k+1}, \mathsf{v} \in \C^k.   
\end{equation}
\end{lemma}
\begin{proof}
We compute using the definition of the bilinear form: 
\begin{alignat*}{2}
   (\ddelta_{k+1} \mathsf{u}, v)_{\C^k}= & \sum_{ \sigma \in \Delta_k} a_\sigma \bl \ddelta_{k+1} \mathsf{u}, \sigma\br  \bl \mathsf{v}, \sigma \br \\
   =& \sum_{ \sigma \in \Delta_k} \sum_{\tau \in \mathcal{A} (\sigma)}   a_\tau \bl \mathsf{u}, \tau\br \bl \mathsf{v}, \sigma\br  && \quad \text{by }\eqref{deltah_effect}\\
   = & \sum_{\tau \in \Delta_{k+1}} \sum_{\sigma \in \mathcal{B}(\tau)}  a_\tau  \bl \mathsf{u}, \tau\br  \bl \mathsf{v}, \sigma\br  && \quad \text{by }\eqref{commutesum}.
\end{alignat*}
On the other hand, 
\begin{alignat*}{2}
     ( u, d^k v)_{\C^k}  =& \sum_{\tau \in \Delta_{k+1}}   a_\tau  \bl \mathsf{u}, \tau\br  \bl \dd^k \mathsf{v}, \tau\br \\
   =& \sum_{\tau \in \Delta_{k+1}}   a_\tau  \bl \mathsf{u}, \tau\br  \bl \mathsf{v}, \partial_{k+1} \tau\br  && \quad \text{by \eqref{defdd}}\\
   =& \sum_{\tau \in \Delta_{k+1}} \sum_{\sigma \in \mathcal{B}(\tau)}  a_\tau  \bl \mathsf{u}, \tau\br  \bl \mathsf{v}, \sigma\br  && \quad \text{by }\eqref{boundarydef}.
\end{alignat*}
\end{proof}


\begin{definition}[Harmonic cochains]
    The space of harmonic cochains is
\begin{equation*}
    \C_{\Hh}^k:=\{\mathsf{w}\in \C^k: \dd \mathsf{w}=\mathsf{0}, \ddelta_k\mathsf{w} = \mathsf{0}\},
\end{equation*}
where $\mathsf{0}$ is the zero cochain.
\end{definition}

We let $P_{\C_{\Hh}^k} \mathsf{w}$ denote the $\C^k$-orthogonal projection of $\mathsf{w}$ onto $\C_{\Hh}^k$ satisfying
\begin{equation*}
    (P_{\C_{\Hh}^k} \mathsf{w}, \mathsf{v})_{\C^k} = (\mathsf{w}, \mathsf{v})_{\C^k}\quad \forall \mathsf{v}\in\C_{\Hh}^k.
\end{equation*}

Finally, the DEC numerical scheme approximating the Hodge-Laplacian problem \eqref{continuousstrongform} is: 
\begin{align}\label{DECscheme}
\nonumber \text{Find $\mathsf{u} \in$}&\text{ $\C^k$ which satisfies }\\
\dDelta^k \mathsf{u}&=  \mathsf{f} - P_{\C_{\Hh}^k}\mathsf{f},\\
\nonumber P_{\C_{\Hh}^k} \mathsf{u} &= \mathsf{0},
\end{align}
where $\mathsf{f}:=\dr^k f$ and $\dr^k$ is the natural mapping of smooth forms to $\C^k$ known as the de Rham map which we discuss in Section \ref{sec:formcochainequivalence}. We note in particular that the solution to the DEC scheme is a cochain, but there is a natural corresponding differential form obtained via the Whitney map which we also discuss in Section \ref{sec:formcochainequivalence}. Moreover, by our definition of the discrete Hodge star, we naturally satisfy the discrete analogue of the conditions \eqref{csfbc1} and \eqref{csfbc2}.

\section{Whitney form setting}\label{sec:WhitneyForms}
In this section, we make explicit the connection between DEC and Whitney forms. We place particular emphasis on the rewriting of the operators defined in the previous section in terms of Whitney forms. 
This formulation is what enables the tools we introduce in Sections \ref{preliminarytools} and \ref{sec:StabilityErrorEstimates}.

\subsection{Whitney forms on the primal mesh}\label{sec:formcochainequivalence}
There is a natural relationship between cochains and the Whitney forms originally considered by Whitney in \cite{Whitney57}. We make note of this relationship in this section, and we perform all of our analysis in later sections working directly with Whitney forms rather than cochains.

The spaces of Whitney forms are the spaces $V_h^k \subset H \Lambda^k(\Omega)$ which form the discrete complex:
\begin{alignat}{4}\label{Whitneyformcomplex}
&\mathbb{R}
\stackrel{\subset}{\xrightarrow{\hspace*{0.5cm}}}\
V_h^0
&&\stackrel{d^0}{\xrightarrow{\hspace*{0.5cm}}}\
V_h^1
&&\stackrel{d^1}{\xrightarrow{\hspace*{0.5cm}}}\
\cdots
&&\stackrel{d^{n-2}}{\xrightarrow{\hspace*{0.5cm}}}\
V_h^{n-1}
\stackrel{d^{n-1}}{\xrightarrow{\hspace*{0.5cm}}}\
V_h^{n},
\end{alignat}
and each $\omega_h \in V_h^k$ is uniquely defined by the following degrees of freedom (dofs).
\begin{equation}\label{Vhdofs}
    \int_\sig \tr_\sig \omega_h  \qquad \forall \sig \in \del_k.
\end{equation}
The dual bases of these dofs are then the bases of the Whitney forms which we denote $\{\phi_{\sig}\}_{\sig\in \del_k}$.

We now note the connection between cochains and Whitney forms. To this end, we define the de Rham map $\dr^k$ on smooth enough forms to admit an $L^1$ trace on $k$ simplices by letting $\dr^k \omega\in \C^k$ be defined by
\begin{equation*}
    \bl \dr^k \omega, \sig\br = \int_{\sig} \tr_{\sig} \omega, \quad \forall \sig\in\del_k.
\end{equation*}
In particular, the de Rham map is well-defined over $V_h^k$. Next, we define the Whitney map $\wm^k:\C^k \to V_h^k$ by
\begin{equation*}
    \wm^k \mathsf{w} = \sum_{\sig\in\del_k}\bl \mathsf{w}, \sig\br\phi_\sig, \quad \mathsf{w}\in\C^k.
\end{equation*}

Notable properties of $\dr^k$ and $\wm^k$ are
\begin{subequations}
\begin{alignat}{2}
    \dr^k \wm^k \mathsf{w} &= \mathsf{w}, \quad &&\mathsf{w}\in\C^k, \label{RW=I}\\
    d^k \wm^k \mathsf{w} &=\wm^{k+1} \dd^k \mathsf{w}, \quad &&\mathsf{w}\in\C^k,\label{dW=Wd}\\
    \dd^k \dr^k \omega_h &= \dr^{k+1} d^k \omega_h \quad && \omega_h\in V_h^k,\label{dR=Rd}\\
    \wm^k \dr^k \omega_h &= \omega_h, \quad &&\omega_h\in V_h^k.\label{WR=I}
\end{alignat}
\end{subequations}
The first and second properties can be found in \cite[Chapter IV, Section 27]{Whitney57}. The third property is exactly Stokes' theorem while the fourth property readily follows from the definitions of $V_h^k$, $\dr^k$, and $\wm^k$.
We denote by $\Pi^k:=\wm^k\dr^k$, the canonical projection onto $V_h^k$ for smooth enough forms to admit an $L^1$ trace on $k$-simplices. That is, $\Pi^k$ is such that
\begin{alignat}{1}
    \int_{\sigma} \tr_\sigma \Pi^k \omega = \int_{\sigma} \tr_\sigma \omega \qquad \forall \sigma \in \Delta_k.\label{canonicalprojection}
\end{alignat}
From \eqref{dW=Wd} and \eqref{dR=Rd}, the canonical projection has the following commuting property:
\begin{equation}\label{commutePid}
  d^k \Pi^k= \Pi^{k+1} d^k. 
\end{equation}

\subsection{Generalized Whitney forms on the dual mesh}
To rewrite the DEC scheme in terms of forms, we will use generalizations of the Whitney forms from the previous section to cellular (in our case, polyhedral cell) complexes developed by Christiansen (\cite{Christiansen08, Christiansen09} and \cite[Section 5]{Christiansen11}). We detail the construction of these spaces in Appendix \ref{GenWhitneyConstruction}. 

\begin{proposition}\label{GeneralizedWhitneyForms}
There exists a sequence of spaces $W_h^k \subset H \Lambda^k(\Omega)$ which form the discrete complex:
\begin{alignat}{4}\label{cellcomplex}
&\mathbb{R}
\stackrel{\subset}{\xrightarrow{\hspace*{0.5cm}}}\
W_h^0
&&\stackrel{d^0}{\xrightarrow{\hspace*{0.5cm}}}\
W_h^1
&&\stackrel{d^1}{\xrightarrow{\hspace*{0.5cm}}}\
\cdots
&&\stackrel{d^{n-2}}{\xrightarrow{\hspace*{0.5cm}}}\
W_h^{n-1}
\stackrel{d^{n-1}}{\xrightarrow{\hspace*{0.5cm}}}\
W_h^{n}.
\end{alignat}
Moreover, each $\omega_h \in W_h^k$ is uniquely defined by the following dofs
\begin{equation}\label{Whdofs}
    \int_\tau \tr_\tau \omega_h  \qquad \forall \tau \in \Delta_k^{\smp}.
\end{equation}
The dual bases of these dofs are the generalized Whitney forms $\{\phi_{\tau}\}_{\tau\in \delC_k}$.
\end{proposition}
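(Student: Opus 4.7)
The plan is to invoke Christiansen's construction of Whitney forms on cellular complexes \cite{Christiansen08, Christiansen09, Christiansen11}, specialized to our polyhedral cell complex $\delC$. The construction proceeds in two stages: first, refine $\delC$ to a simplicial complex (using the simplicial decomposition guaranteed by the definition of polyhedral cell) and consider the classical Whitney forms on this refinement; second, select a subspace of these whose degrees of freedom are tied to the cells of $\delC$ rather than the refining simplices. Globally, $W_h^k$ will inherit the $H\Lambda^k(\Omega)$-conformity from the underlying simplicial Whitney spaces.

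More precisely, to construct $\phi_\tau$ for each $\tau \in \delC_k$, one proceeds inductively over the dimension of supporting cells. The basis form $\phi_\tau$ is required to satisfy $\int_\tau \phi_\tau = 1$ and $\int_{\tau'} \phi_\tau = 0$ for all other $\tau' \in \delC_k$. On cells of dimension greater than $k$, $\phi_\tau$ is defined as the minimal (harmonic-type) extension from the boundary data already prescribed on the lower-dimensional skeleton, as in Christiansen's framework. This extension procedure is designed to commute with the exterior derivative, which yields the complex property $d^k W_h^k \subset W_h^{k+1}$. The forms $\phi_\tau$ are supported in the closed star of $\tau$ within $\delC$, and their linear combinations define $W_h^k$.

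For unisolvency of the cell-based degrees of freedom \eqref{Whdofs}, existence of an interpolant follows immediately from the construction, while uniqueness reduces to showing that a form in $W_h^k$ with all cell-integrals vanishing is identically zero. This is established inductively by exploiting the local uniqueness of the harmonic extensions together with the exactness of the local sequence on each cell. The main obstacle is verifying that the inductive extensions from the boundary of each polyhedral cell to its interior are both well-defined and compatible with $d$; this is precisely the abstract content of \cite{Christiansen08, Christiansen09}, which relies on the contractibility of each polyhedral cell in $\delC$ to produce an exact local complex whose harmonic spaces are trivial. Once this abstract machinery is specialized to our polyhedral cell complex, the proposition follows.
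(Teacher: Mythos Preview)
Your second and third paragraphs are essentially the paper's own argument (given in the Appendix): define local spaces on each cell $T\in\delC$ as forms whose trace on every subface $F$ is harmonic with vanishing codifferential, prove unisolvency by induction on face dimension using uniqueness of the $\mathfrak{B}^*$-type boundary value problem, and verify $d^k W_h^k(T)\subset W_h^{k+1}(T)$ directly from the definitions. So the core strategy is correct and matches the paper.

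However, your first paragraph mischaracterizes the construction. You describe refining $\delC$ simplicially and then taking a subspace of the \emph{classical} (piecewise-polynomial) Whitney forms on that refinement. That is not what Christiansen's construction does, nor what the paper's Appendix does: the local spaces $W_h^k(T)$ are defined by \emph{continuous} Hodge--Laplace problems on each face, so their elements are generally not piecewise polynomial and do not sit inside any simplicial Whitney space. Consequently, your claim that global $H\Lambda^k$-conformity is ``inherited from the underlying simplicial Whitney spaces'' is not the right mechanism; conformity instead follows because traces on shared subcells are determined by the same dofs via the same inductive harmonic extension. Drop the simplicial-refinement framing and rely solely on the harmonic-extension description you give in your second paragraph, and the argument aligns with the paper.
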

We define the spaces with boundary conditions as
\begin{equation*}
    \mathring{W}_h^{k} := W_h^k\cap \mathring{H}\La^k(\Omega).
\end{equation*}

Next, we define the dual de Rham map $\dr_{\smp}^k$ on smooth enough forms to admit an $L^1$ trace on the dual $k$-cells in $\delC_{*}$ by letting $\dr_{\smp}^k \omega\in \C_{\smp}^{*,k}$ be defined by
\begin{equation*}
    \bl \dr_{\smp}^k \omega, *\sig\br = \int_{*\sig} \tr_{*\sig} \omega, \quad \forall \sig\in\del_{n-k}.
\end{equation*}
We also define the dual Whitney map $\wm_{\smp}^k:\C_{\smp}^k \to W_h^k$ by
\begin{equation*}
    \wm_{\smp}^k \mathsf{w} = \sum_{\tau\in\delC_k}\bl \mathsf{w}, \tau\br\phi_\tau, \quad \mathsf{w}\in\C_{\smp}^k.
\end{equation*}
Note that $\wm_{\smp}^k$ maps $\C_{\smp}^{*,k}$ to $\mathring{W}_h^{k}$. We let $\Pi_{\smp}^k : = \wm_{\smp}^k \dr_{\smp}^k$ be the canonical projection to $\mathring{W}_h^{k}$. For a smooth enough $k$-form $u$, $\Pi_{\smp}^k u$ is such that
\begin{equation*}
\int_{\tau} \tr_\tau \Pi_{\smp}^k u= \int_{\tau} \tr_\tau u   \qquad \forall \tau \in \delC_{*,k}.   
\end{equation*}

Moreover, since the generalized Whitney forms form the complex \eqref{cellcomplex}, we see that $d^k \wm_{\smp}^k \mathsf{w} = \wm_{\smp}^{k+1} \dd_{\smp}^k \mathsf{w}$ for all $\mathsf{w}\in \C_{\smp}^{*, k}$, and from Stokes' theorem, $\dd_{\smp}^k \dr_{\smp}^k \omega_h = \dr_{\smp}^{k+1} d^k \omega_h$ for all smooth enough differential $k$-forms $\omega_h$. Hence, one readily sees that
\begin{equation}\label{Pidcommutedual}
  d^k \Pi_{\smp}^k = \Pi_{\smp}^{k+1} d^k.  
\end{equation}

\begin{remark}[Previous work involving generalized Whitney forms]
    Similar generalizations of Whitney forms also appear in the analysis of Virtual Element Methods (e.g. see \cite[Section 6]{daVeiga23}). Generalized Whitney forms on dual meshes have also been introduced in \cite{Gillette11}, but these are such that the dual Whitney map does not commute with the exterior derivative (cf. \cite[Remark 4.7]{Gillette11}). In particular, this means that the key property \eqref{Pidcommutedual} is not satisfied and so these constructions can not be immediately used for the framework developed in this work.
\end{remark}

\subsection{The DEC scheme on forms}\label{sec:DEC_numerical_scheme_forms}
We now turn to defining the equivalent operations on forms to the DEC operations introduced in Section \ref{sec:DEC_numerical_scheme}. Once these are defined, we can rewrite the DEC scheme purely in terms of differential forms.

We begin by defining bilinear forms that are inner-products on the spaces $V_h^k$ which are the natural corresponding inner products to \eqref{def:cochain_inner_product} as
\begin{alignat}{1}\label{iprelation}
    \llb v, w\rrb := ( \dr v , \dr w )_{\C^k} = \sum_{ \sigma \in \Delta_k} a_\sigma \int_\sigma \tr_\sig u  \int_\sigma \tr_\sig v \qquad \text{where } a_{\sig}= \frac{|*\sig|}{|\sig|}.
\end{alignat}
We denote the corresponding norm by $\dn{\cdot}$.
we note that for any smooth enough $k$-form $u$, 
\begin{equation}\label{Pinormequal}
    \dn{\Pi^k u} = \dn{u},
\end{equation}
which follows directly from the definitions of $\Pi^k$ and $\dn{\cdot}$.

We also define bilinear forms which are inner products on the spaces $\mathring{W}_h^{n-k}$.
\begin{equation*}
    \llb u, v \rrb_{*} = \sum_{\sig\in\del_{k}} b_\sig \int_{*\sig} \tr_{*\sig} u \int_{*\sig} \tr_{*\sig} v, \qquad \text{where } b_{\sig}= \frac{|\sig|}{|*\sig|}.
\end{equation*}
We denote the corresponding discrete norm by $\dn{\cdot}_{*}$.

It will also be useful to define the discrete $H\La^k(\Omega)$ inner product as:
\begin{equation*}
    \llb u, v\rrb_{H_d}:=\llb u, v\rrb + \llb du, dv\rrb,
\end{equation*}
with corresponding norm $\dn{\cdot}_{H_d}$.

\begin{definition}[Discrete Hodge star on forms]\label{FormDiscHodgeStar}
The discrete Hodge star operator $\star_h$ is such that for a sufficiently smooth $k$-form $\omega$, we let $\star_h \omega \in \mathring{W}_h^{n-k}$ be given by
\begin{equation}\label{FormHodgestar}
    \frac{1}{|*\sigma|} \int_{*\sig}  \tr_{*\sig}\star_h \omega=  \frac{1}{|\sigma|} \int_{\sigma}  \tr_{\sig} \omega \qquad \forall \sigma \in \Delta_k(\Th).
\end{equation}
Similarly, for a sufficiently smooth $n-k$ form, $\omega$, we define $\star_h^{-1} \omega \in V_h^k$ by
\begin{equation}\label{FormHodgestarinverse}
\frac{1}{|\sigma|} \int_{\sigma}\tr_{\sig}  \star_h^{-1} \omega=  \frac{1}{|*\sigma|} \int_{*\sigma}\tr_{*\sig}  \omega \qquad \forall \sigma \in \Delta_k(\Th).
\end{equation}
\end{definition}
We easily see that $\star_h^{-1} \star_h$ is the identity if we restrict to $V_h^k$, and $\star_h \star_h^{-1}$ is the identity if we restrict to $\mathring{W}_h^{k}$. 

\begin{definition}[Discrete Codifferential on forms]\label{FormDiscCodiff}
The operator $\delta_{h, k}: V_h^k \rightarrow V_h^{k-1}$  is defined as 
\begin{equation*}
    \delta_{h, k} =   (-1)^k \star_h^{-1} d^{n-k} \star_h. 
\end{equation*}
\end{definition}

\begin{definition}[Discrete Hodge-Laplacian on forms]\label{FormDiscHodgeLap}
We define the discrete Hodge-Laplacian $\Delta_h^k : V_h^k \rightarrow V_h^k$ as
\begin{equation*}
    \Delta_h^k :=  d^{k-1} \delta_{h,k} +\delta_{h,k+1} d^k.
\end{equation*}
\end{definition}

From the definitions above, one readily obtains the following relationships.
\begin{lemma}\label{lem:equivalences}
    For $\mathsf{w}\in\C^k$, it holds,
    \begin{subequations}
    \begin{alignat}{1}
        \dr^{k+1} d^k \wm^k \mathsf{w}&= \dd^k \mathsf{w}, \label{dequiv}\\
        \dr_{\smp}^{n-k} \star_h \wm^k \mathsf{w} &= \ccstar \mathsf{w}, \label{hsequiv}\\
        \dr^{k-1}\delta_{h,k}\wm^k \mathsf{w}&= \ddelta_k\mathsf{w}, \label{deltaequiv}\\
        \dr^k \Delta_h^k \wm^k  \mathsf{w}&= \dDelta^k \mathsf{w},\label{HLequiv}\\
        \dr^k P_{\Hh_h^k} \wm^k\mathsf{w} &= P_{\C_{\Hh}^k}\mathsf{w},\label{Hhequiv}
    \end{alignat}
    and for $\mathsf{w}\in\C_{\smp}^{*,k}$, it holds
    \begin{equation}
            \dr^{n-k} \star_h^{-1} \wm_{\smp}^k \mathsf{w}= \ccstar^{-1} \mathsf{w}.  \label{hsiequiv}
    \end{equation}
    \end{subequations}
\end{lemma}

We remark that from \eqref{RW=I} and \eqref{deltaequiv}, we have $\delta_{h,k}\,\delta_{h,k-1} = 0$, and so the discrete codifferential also forms a complex over the Whitney forms.
Moreover, using \eqref{RW=I}, \eqref{adjunction_cochain}, and \eqref{dR=Rd}, we immediately have the following lemma.

\begin{lemma}[Adjoint property for discrete codifferential on forms]
    For any $v\in V_h^k$ and $w\in V_h^{k-1}$, 
\begin{equation}\label{adjunction}
    \llb \delta_{h,k} v, w\rrb = \llb v, d^{k-1} w\rrb.
\end{equation}
\end{lemma}
Next, we define the space of discrete harmonic $k$-forms as 
\begin{equation*}
    \Hh_h^k:=\{v\in V_h^k: d^k v = 0, \delta_{h,k} v = 0\},
\end{equation*}
and let $P_{\Hh_h^k}$ denote the orthogonal projection onto $\Hh_h^k$ satisfying
\begin{equation*}
    \llb P_{\Hh_h^k} u, v \rrb = \llb u, v\rrb \quad \forall v\in \Hh_h^k.
\end{equation*}

We now write explicitly the DEC scheme from Section \ref{sec:DEC_numerical_scheme} on forms.
\begin{align}\label{numericalscheme}
\nonumber \text{Find $u_h$}& \text{$\in V_h^k$ which satisfies }\\
\Delta_h^k u_h&=  \Pi^k f - P_{\Hh_h^k}\Pi^k f,\\
\nonumber P_{\Hh_h^k}u_h &= 0.
\end{align}

Letting $\rho_h=\delta_{h,k} u_h$ and $p_h= P_{\Hh_h^k}f$, we can rewrite \eqref{numericalscheme} as
\begin{subequations}\label{mixedform}
  \begin{align}
    \text{Find }(u_h,\rho_h)\in V_h^k\times V_h^{k-1}
      \text{ su} &\text{ch that}\nonumber\\
    \rho_h - \delta_{h,k}\,u_h                          &= 0,          \label{mf1}\\
    d^{k-1}\rho_h + \delta_{h,k+1} d^k u_h + p_h        &= \Pi^k f,    \label{mf2}\\
    P_{\Hh_h^k}u_h                                       &= 0.          \label{mf3}
  \end{align}
\end{subequations}

Finally, we note the following result on the equivalence between the solution one obtains from the DEC scheme over cochains \eqref{DECscheme} and that of the DEC scheme over forms \eqref{numericalscheme}.
\begin{proposition}
    Let $\mathsf{u}$ be the solution to \eqref{DECscheme} and $u_h$ be the solution to \eqref{numericalscheme}. Then, $\mathsf{u} = \dr^k u_h$.
\end{proposition}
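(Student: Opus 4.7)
The plan is to show that $\dr^k u_h$ satisfies the DEC scheme \eqref{DECscheme} and then invoke uniqueness to conclude $\mathsf{u} = \dr^k u_h$. The key tool is the equivalence result Proposition \ref{HLequivalence} combined with the identities \eqref{RW=I} and \eqref{WR=I}.

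First, since $u_h \in V_h^k$, the property \eqref{WR=I} gives $u_h = \wm^k \dr^k u_h$. Applying $\dr^k$ to both sides of the form scheme \eqref{numericalscheme}, we obtain
\begin{equation*}
    \dr^k \Delta_h^k u_h = \dr^k \Delta_h^k \wm^k (\dr^k u_h) = \dDelta^k (\dr^k u_h),
\end{equation*}
where the second equality uses Proposition \ref{HLequivalence}. For the right-hand side, we note that $\Pi^k f = \wm^k \dr^k f$ (as remarked in the text just after \eqref{WR=I}), so by \eqref{RW=I},
\begin{equation*}
    \dr^k \Pi^k f = \dr^k \wm^k \dr^k f = \dr^k f = \mathsf{f}.
\end{equation*}
Combining these two computations yields $\dDelta^k (\dr^k u_h) = \mathsf{f}$, so $\dr^k u_h$ solves the DEC scheme.

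Finally, invoking uniqueness of the solution to the DEC scheme \eqref{DECscheme} (which follows from well-posedness, under the same contractibility/compatibility assumptions that ensure well-posedness of \eqref{continuousstrongform}, together with the equivalence to the form scheme \eqref{numericalscheme} discussed throughout this section), we conclude $\mathsf{u} = \dr^k u_h$.

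The main conceptual step is the use of Proposition \ref{HLequivalence}; everything else is bookkeeping with \eqref{RW=I} and \eqref{WR=I}. The only subtlety is that one should be sure the DEC scheme has a unique solution — but since Proposition \ref{HLequivalence} furnishes a bijection between solutions of \eqref{DECscheme} and \eqref{numericalscheme} (via $\wm^k$ and $\dr^k$ which are mutual inverses on $V_h^k$ and $\C^k$), unique solvability of one is equivalent to unique solvability of the other, and both will be established in the error analysis of Section \ref{sec:errorestimates}.
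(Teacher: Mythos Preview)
Your proof is correct and follows essentially the same approach as the paper: both use \eqref{WR=I} to write $u_h = \wm^k \dr^k u_h$, apply Proposition \ref{HLequivalence} to obtain $\dDelta^k(\dr^k u_h) = \dr^k f$, and then invoke uniqueness of the DEC solution. Your version is slightly more expanded in that you explicitly verify $\dr^k \Pi^k f = \dr^k f$ and comment on the source of uniqueness, while the paper compresses these steps into a single displayed chain of equalities.
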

\begin{proof}
    We have from \eqref{RW=I} and \eqref{Hhequiv} that
    \begin{alignat*}{1}
        \dr^k f  - P_{\C_{\Hh}^k} \dr^k f &= \dr^k \wm^k\dr^k f - \dr^k P_{\Hh_h^k} \wm^k \dr^k f = \dr^k\left( \Pi^k f - P_{\Hh_h^k} \Pi^k f\right).
    \end{alignat*}
    Then, using the DEC scheme over forms \eqref{numericalscheme} and \eqref{HLequiv}, we have that
    \begin{equation*}
        \dr^k\left( \Pi^k f - P_{\Hh_h^k} \Pi^k f\right) = \dr^k \Delta_h^k u_h = \dr^k \Delta_h^k \wm^k\dr^k u_h = \dDelta^k \dr^k u_h.
    \end{equation*}
    However, we know that $\dDelta^k \mathsf{u} = \dr^k f  - P_{\C_{\Hh}^k} \dr^k f$ and so the result then follows by uniqueness of the solution to \eqref{DECscheme}.
\end{proof}
Hence, we can analyze the DEC scheme \eqref{DECscheme} by instead analyzing \eqref{numericalscheme}. We remark that one can analyze the DEC scheme \eqref{DECscheme} directly without ever invoking generalized Whitney forms (see Remark \ref{rem:cochain_analysis} below) but we choose to analyze \eqref{numericalscheme} to highlight the naturality of the tools we use from the FEEC perspective.

\section{Preliminary tools and results}\label{preliminarytools}

In this section, we introduce crucial tools and note results which will be useful for the proof of our error estimates in Section \ref{sec:StabilityErrorEstimates}. For brevity, we will no longer explicitly note operators in terms of $k$ and $n$ nor write $\tr$ when integrating over a $k$-cell or $k$-face when this is clear from context.

\subsection{Interpolant that commutes with the discrete codifferential}\label{sec:projections}
We begin by introducing an interpolant that sends smooth enough $k$ forms to $V_h^k$ as
\begin{alignat*}{1}
    J:= \star_h^{-1} \Pi_{\smp} \star.
\end{alignat*}
The key property of this interpolant is that it satisfies a commuting property with the discrete co-differential $\delta_h$. We state and prove this result in the following lemma.
\begin{lemma}[Commuting property of $J$]
For $u$, a smooth enough $k$-form for $Ju$ and $J\delta u$ to be well-defined, 
\begin{equation}\label{commuteJdelta}
\delta_h J u = J \delta u.   
\end{equation}
\end{lemma}
\begin{proof}
We directly compute using the definitions of $\delta_h$, $J$, and $\delta$:
  \begin{alignat*}{2}
    \delta_h J u= & (-1)^k \star_h^{-1} d \star_h J u\\
      =&  (-1)^k \star_h^{-1} d  \Pi_{\smp} \star u  && \quad \text{since }\star_h \star_h^{-1} \text{ is identity}\\
      = & (-1)^k \star_h^{-1} \Pi_{\smp} d \star u && \quad \text{by \eqref{Pidcommutedual}} \\
      = &  (-1)^k \star_h^{-1} \Pi_{\smp} \star  \star^{-1} d \star u &&\quad \text{since }\star \star^{-1} \text{ is identity} \\
      = & J \delta u. 
\end{alignat*}  
\end{proof}
We will also use the following lemma in the proof of the error estimates.
\begin{lemma}
For $u$, a smooth enough $k$-form for $J$ to be well-defined, 
\begin{equation}\label{Jbound}
     \dn{J u} = \dn{\star u}_*.
\end{equation}
\end{lemma}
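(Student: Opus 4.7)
The plan is to unwind the three ingredients of $J = \star_h^{-1}\Pi_{\smp}\star$ directly on each primal simplex $\sigma\in\Delta_k$. Since both sides of \eqref{Jbound} are defined as sums of squared integrals weighted by ratios of primal/dual volumes, the proof should reduce to a one-line computation once the relevant integrals over $\sigma$ are rewritten as integrals of $\star u$ over $*\sigma$.

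Concretely, I would first apply the defining property \eqref{FormHodgestarinverse} of $\star_h^{-1}$ with the $(n-k)$-form $\Pi_{\smp}\star u\in W_h^{n-k}$ to get
\begin{equation*}
\int_\sigma \tr_\sigma Ju \;=\; \frac{|\sigma|}{|{*\sigma}|}\int_{*\sigma}\tr_{*\sigma}\Pi_{\smp}\star u \qquad \forall \sigma\in\Delta_k.
\end{equation*}
Then I would invoke the defining property of the canonical projection $\Pi_{\smp}$ on the dual cell $*\sigma\in\delC_{n-k}$ to replace $\Pi_{\smp}\star u$ by $\star u$ inside the integral, yielding
\begin{equation*}
\int_\sigma \tr_\sigma Ju \;=\; \frac{|\sigma|}{|{*\sigma}|}\int_{*\sigma}\tr_{*\sigma}\star u.
\end{equation*}

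Squaring this identity and multiplying by $a_\sigma=|{*\sigma}|/|\sigma|$ produces $\frac{|\sigma|}{|{*\sigma}|}\bigl(\int_{*\sigma}\tr_{*\sigma}\star u\bigr)^2 = b_\sigma\bigl(\int_{*\sigma}\tr_{*\sigma}\star u\bigr)^2$, so summing over $\sigma\in\Delta_k$ gives exactly $\dn{Ju}^2 = \dn{\star u}_*^2$ by the definitions of $\dn{\cdot}$ and $\dn{\cdot}_*$. There is no real obstacle: the only thing to double-check is that the cancellation of the volume weights matches the two discrete norms (with $a_\sigma b_\sigma = 1$), which is immediate from the definitions given just above and below. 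In particular, no approximation estimates, projection bounds, or Hodge-decomposition machinery are needed — the identity is an exact algebraic consequence of the defining relations for $\star_h^{-1}$ and $\Pi_{\smp}$.
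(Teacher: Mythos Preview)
Your proposal is correct and follows essentially the same approach as the paper: both unwind $J=\star_h^{-1}\Pi_{\smp}\star$ using the defining identities \eqref{FormHodgestarinverse} for $\star_h^{-1}$ and the canonical projection property of $\Pi_{\smp}$, then match the volume weights $a_\sigma$ and $b_\sigma$. The only cosmetic difference is that the paper carries the sum over $\sigma\in\Delta_k$ through the whole chain of equalities, whereas you first establish the pointwise identity $\int_\sigma Ju = \tfrac{|\sigma|}{|{*\sigma}|}\int_{*\sigma}\star u$ and then square and sum.
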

\begin{proof}
    We compute using the definitions of $J$, $\star_h^{-1}$, and $\Pi_{\smp}$. 
\begin{align*}
    \dn{Ju}^2 &= \sum_{ \sigma \in \Delta_k} a_\sig \left(\int_\sigma J u\right)^2 = \sum_{ \sigma \in \Delta_k} a_\sig \left(\int_\sigma \star_h^{-1} \Pi_{\smp} \star u\right)^2\\
    &= \sum_{ \sigma \in \Delta_k} b_\sig \left(\int_{* \sigma } \Pi_{\smp} \star u\right)^2 = \sum_{ \sigma \in \Delta_k}  b_\sig \left(\int_{* \sigma}  \star u\right)^2 = \dn{\star u}_*^2
\end{align*}
\end{proof}

\subsection{The difference $\Pi-J$}
The idea to use the operator $J$ is inspired by the analysis in \cite[Section 3]{Schulz20} where the authors estimate the difference $\ccstar \dr - \dr_{\smp} \star$ in appropriate cochain norms. Our analysis will instead rely on estimating the difference $\Pi - J = \wm (\dr -  \ccstar^{-1} \dr_{\smp} \star)$ in the discrete norm.

To this end, we aim to characterize the kernel of the difference $\Pi-J$. In particular, we argue that as long as our dual mesh satisfies the properties outlined in section \ref{sec:dual_mesh} for all $\sigma \in \Delta_k$, then the kernel of $\Pi-J$ contains certain piecewise constant $k$-forms. 

We define an $n$-dimensional ``diamond cell" associated to any $\sigma \in \del_k$ denoted $\dc(\sig)$. The construction of the diamond cells parallels that of the construction of the dual cells detailed earlier in Section \ref{sec:dual_mesh}. 
\begin{definition}[Diamond cell]\label{defdc}
Let $\sig\in\del_k$. For notational convenience, we will denote $\sig= \sig_k$. Then, there exists an ordered set of simplices $\{\sig_{0}, \sig_{1}, \dots, \sig_{k-1}, \sig,  \sig_{k+1}, \dots, \sig_{n}\}$ such that $\sig_{j}\subset \sig_{j+1}$ for all $j<n$, and $\sig_{j}\in \del_{j}$ for each $j\in\{0,\dots,n\}$. The collection of all such ordered sets of simplices is denoted $\mathcal{S}(\sig)$. For any $S=\{\sig_0, \dots, \sig_n\}\in\mathcal{S}(\sig)$, we can identify an $n$-simplex $\tau_S:=[c(\sig_0), \dots, c(\sig_n)]$ where $c(\sig)$ denotes the circumcenter of $\sig$. Then, we define
\begin{equation*}
    \dc(\sig):= \bigcup_{S\in\mathcal{S}(\sig)} \tau_S.
\end{equation*}
\end{definition}
We will need the following assumption on the diamond cells.
\begin{assumption}[Poincar\'e inequality on $\dc(\sig)$]\label{PoincareAssumption}
    We assume the simplicial decomposition $\{\tau_S\}_{S\in \mathcal{S}(\sigma)}$ of $\dc(\sigma)$ noted in Definition \ref{defdc} is shape-regular. Moreover, we further assume that for any $\sig\in\del_k$ and $v\in H^1(\Omega)$ with average zero over $\dc(\sig)$, we have
    \begin{equation}
        \|v\|_{L^2(\dc(\sig))} \leq C_P h |v|_{H^1(\dc(\sig))},
    \end{equation}
    where $C_P$ is independent of $h$ and is uniformly bounded over all $\sig\in \del_k$.
\end{assumption}
There are various ways one can establish such a result. For instance, if it is the case that all of the cells $\dc(\sig)$ are convex, then one can apply the results of \cite{Bebendorf, Payne60}. More generally, the boundedness of $C_P$ on nonconvex domains can be found in \cite[Lemma 3.7]{Eymard00}.

The notion of the diamond cell has previously arisen in the finite volume literature \cite[Section 2.2]{Coudiere11} and in the gradient discretization method \cite[Definition 7.2]{Droniou18} in dimensions 2 and 3, although the definitions are such that various points can be used rather than specifically circumcenters.

We define the cellular mesh of diamond cells as
\begin{equation}
\mathcal{Z}_h^k:=\{ \dc(\sigma): \sigma \in \Delta_k \}.
\end{equation}
An example is illustrated in Figure \ref{fig:diamondcellmesh}.

\begin{figure}[H]
\centering
\begin{subfigure}[b]{0.3\textwidth}
\centering
\begin{tikzpicture}[scale = 2.5, >={Latex[round]}] 

    \coordinate (A) at (0, 0);
    \coordinate (B) at (1, 0);
    \coordinate (C) at (1.5, 1);
    \coordinate (D) at (0.5, 1);

    \coordinate (C1) at (0.5, 0.375);
    \coordinate (C2) at (1, 0.625);

    \draw [blue] (A) -- (B);
    \draw [blue] (B) -- (D);
    \draw [blue] (D) -- (A);
    
    \draw [blue] (B) -- (C);
    \draw [blue] (C) -- (D);
    
    \foreach \p in {A,B,C,D} {
        \fill[black] (\p) circle (0.75pt);
    }

\end{tikzpicture}
\caption{Primal mesh $\Th$}
\end{subfigure}
\hfill
\begin{subfigure}[b]{0.3\textwidth}
\centering

\begin{tikzpicture}[scale = 2.5]

    \coordinate (A) at (0, 0);
    \coordinate (B) at (1, 0);
    \coordinate (C) at (1.5, 1);
    \coordinate (D) at (0.5, 1);
    
    \coordinate (M1) at (0.5, 0);
    \coordinate (M2) at (0.25, 0.5);
    \coordinate (M3) at (0.75, 0.5);
    \coordinate (M4) at (1, 1);
    \coordinate (M5) at (1.25, 0.5);
    
    \coordinate (C1) at (0.5, 0.375);
    \coordinate (C2) at (1, 0.625);

    \draw (A) -- (B);
    \draw (B) -- (C);
    \draw (C) -- (D);
    \draw (D) -- (A);

    \draw [red](M2) -- (C1);
    \draw [red](M1) -- (C1);
    \draw [red](M3) -- (C1);
    \draw [red](M5) -- (C2);
    \draw [red](M4) -- (C2);
    \draw [red](M3) -- (C2);
    
    \foreach \p in {A,B,C,D,C1,C2,M1,M2,M4,M5} {
        \fill[black] (\p) circle (0.75pt);
    }
    
\end{tikzpicture}
    \caption{Dual mesh $\Dh$}
\end{subfigure}
\hfill
\begin{subfigure}[b]{0.3\textwidth}
\centering

\begin{tikzpicture}[scale = 2.5]

    \coordinate (A) at (0, 0);
    \coordinate (B) at (1, 0);
    \coordinate (C) at (1.5, 1);
    \coordinate (D) at (0.5, 1);
    
    \coordinate (M1) at (0.5, 0);
    \coordinate (M2) at (0.25, 0.5);
    \coordinate (M3) at (0.75, 0.5);
    \coordinate (M4) at (1, 1);
    \coordinate (M5) at (1.25, 0.5);
    
    \coordinate (C1) at (0.5, 0.375);
    \coordinate (C2) at (1, 0.625);

    \fill[gray, opacity=0.3] (D) -- (C1) -- (B) -- (C2) -- cycle;

    \draw [thick, black](A) -- (B);
    \draw [thick, black](B) -- (C);
    \draw [thick, black](C) -- (D);
    \draw [thick, black](D) -- (A);

    \draw [blue, opacity = 0.5](A) -- (B);
    \draw [blue, opacity = 0.5](B) -- (C);
    \draw [blue, opacity = 0.5](C) -- (D);
    \draw [blue, opacity = 0.5](D) -- (A);
    \draw [blue, opacity = 0.5](D) -- (B);

    \draw [thick](A) -- (C1);
    \draw [thick](B) -- (C1);
    \draw [thick](B) -- (C2);
    \draw [thick](C) -- (C2);
    \draw [thick](D) -- (C2);
    \draw [thick](D) -- (C1);

    \draw [red, opacity = 0.5](M2) -- (C1);
    \draw [red, opacity = 0.5](M1) -- (C1);
    \draw [red, opacity = 0.5](M3) -- (C1);
    \draw [red, opacity = 0.5](M5) -- (C2);
    \draw [red, opacity = 0.5](M4) -- (C2);
    \draw [red, opacity = 0.5](M3) -- (C2);
    
    \foreach \p in {A,B,C,D,C1,C2} {
        \fill[black] (\p) circle (0.75pt);
    }
    
\end{tikzpicture}
    \caption{Diamond cell mesh, $\mathcal{Z}_h^1$}
\end{subfigure}
\caption{2d illustration of a primal mesh, dual mesh, and diamond cell mesh for $k=1$. The primal edges are highlighted in blue. The dual edges are highlighted in red. A single diamond cell is shaded in gray in (c). Each diamond cell contains exactly one primal edge and its dual.}
\label{fig:diamondcellmesh}
\end{figure}

We then see that $\mathcal{Z}_h^k$ is indeed a conforming cellular mesh as it partitions $\Omega$ into sets such that their interiors do not intersect. That is, 
\begin{equation}
\overline{\Omega}= \bigcup_{S \in \mathcal{Z}_h^k} S,
\end{equation}
and if $S, T \in  \mathcal{Z}_h^k$ then $\text{int}(S) \cap \text{int}(T) = \emptyset$.

We then define the space of piecewise polynomial $k$-forms on the diamond cell mesh $\mathcal{Z}_h^k$ as
\begin{equation*}
    \pol_p\La^k(\mathcal{Z}_h^k): = \{u \in L^2 \La^k(\Omega): u|_{S} \in \pol_p \La^k(\R^n),  \forall S \in \mathcal{Z}_h^k\},
\end{equation*}
where $\pol_p\La^k(\R^n)$ is the space of $k$-forms on $\R^n$ with polynomial coefficients of degree less than or equal to $p$. Notice that $\Pi$ (resp. $J$) is well-defined over $\pol_p\La^k(\mathcal{Z}_h^k)$ because each diamond cell contains exactly one $\sig\in\del_k$ (resp. $*\sig\in\delC_k$). 

We now begin to characterize the kernel of $\Pi-J$ with the following lemma. 
\begin{lemma}\label{lem:kernel}
If
\begin{equation}\label{piecewiseconstantrelation}
    \frac{1}{|\sigma|}\int_{\sigma} u = \frac{1}{|*\sigma|}\int_{*\sigma}\star u \quad \forall \sigma \in \Delta_k,
\end{equation}
then $(\Pi - J) u=0$.
\end{lemma}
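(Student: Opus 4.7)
The plan is to unpack the definition of $J = \star_h^{-1} \Pi_{\smp} \star$ at the level of the degrees of freedom \eqref{Vhdofs} that uniquely determine elements of $V_h^k$, and match them against the canonical projection $\Pi u$ using the hypothesis \eqref{piecewiseconstantrelation}. Since both $\Pi u$ and $J u$ belong to $V_h^k$, it suffices to show that $\int_\sigma \Pi u = \int_\sigma J u$ for every $\sigma \in \Delta_k$.

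First I would compute the dofs of $J u$. Fix $\sigma \in \Delta_k$. By the defining property \eqref{FormHodgestarinverse} of $\star_h^{-1}$ applied to $\Pi_{\smp} \star u \in W_h^{n-k}$, together with the defining property of $\Pi_{\smp}$ (which preserves integrals over dual cells $*\sigma \in \Delta_k^{\smp}$), I obtain the chain of equalities
\begin{equation*}
\frac{1}{|\sigma|}\int_\sigma J u \;=\; \frac{1}{|\sigma|}\int_\sigma \star_h^{-1} \Pi_{\smp}\star u \;=\; \frac{1}{|*\sigma|}\int_{*\sigma} \Pi_{\smp}\star u \;=\; \frac{1}{|*\sigma|}\int_{*\sigma}\star u.
\end{equation*}

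Next, by the hypothesis \eqref{piecewiseconstantrelation} and the definition \eqref{canonicalprojection} of the canonical projection $\Pi$, this last expression equals
\begin{equation*}
\frac{1}{|*\sigma|}\int_{*\sigma}\star u \;=\; \frac{1}{|\sigma|}\int_\sigma u \;=\; \frac{1}{|\sigma|}\int_\sigma \Pi u.
\end{equation*}
Hence $\int_\sigma \Pi u = \int_\sigma J u$ for every $\sigma \in \Delta_k$, and since elements of $V_h^k$ are uniquely determined by these dofs, $\Pi u = J u$.

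There is no substantial obstacle here: the statement essentially follows by unwinding definitions, and the only subtlety to mention is that $\star u$ must be regular enough for $\Pi_{\smp}$ and $\star_h^{-1}$ to act on it, which is implicit in assuming $J u$ is well-defined.
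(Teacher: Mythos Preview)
Your proof is correct and follows essentially the same approach as the paper: both reduce to checking the degrees of freedom on each $\sigma\in\Delta_k$ and unwind the definitions of $J=\star_h^{-1}\Pi_{\smp}\star$, $\star_h^{-1}$, and $\Pi_{\smp}$ in the same order before invoking the hypothesis. The only cosmetic difference is that the paper computes $\int_\sigma(\Pi-J)u$ directly and shows it vanishes, whereas you compute $\int_\sigma Ju$ and $\int_\sigma \Pi u$ separately and match them.
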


\begin{proof}
It suffices to show that  $\int_{\sigma} (\Pi - J)u=0$ for all $\sigma \in \Delta_k$. To this end, 
\begin{equation*}
    \int_{\sigma} (\Pi - J)u = \int_{\sigma} u - \int_{\sigma}\star_h^{-1}\Pi_{\smp}\star u = \int_{\sigma} u -\frac{|\sigma|}{|*\sigma|}\int_{*\sigma}\Pi_{\smp}\star u = \int_{\sigma} u - \frac{|\sigma|}{|*\sigma|}\int_{*\sigma}\star u = 0.
\end{equation*}
\end{proof}
Note that the condition \eqref{piecewiseconstantrelation} is equivalent to $\Pi_{\smp} \star u = \star_h u$. Now, the following lemma crucially uses that elements of $\del$ and their duals are orthogonal as noted in Property 2 of Definition \ref{defdualmesh}. This idea was first used in the proof of Theorem 3.2 in \cite{Schulz20}.
\begin{lemma}\label{kernel}
We have that $\pol_0\Lambda^k (\mathcal{Z}_h^k) \subset \Kern(\Pi - J)$.
\end{lemma}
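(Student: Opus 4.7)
The plan is to reduce the claim to Lemma~\ref{lem:kernel}: it suffices to verify condition \eqref{piecewiseconstantrelation} for every $u\in\pol_0\Lambda^k(\mathcal{Z}_h^k)$ and every $\sigma\in\Delta_k$. Since $u$ is piecewise constant on the diamond cell mesh and both $\sigma$ and $*\sigma$ are contained in the single diamond cell $\dc(\sigma)$, the form $u$ restricts to a fixed constant $k$-form $\omega$ on $\sigma\cup *\sigma$. Thus, the problem is reduced to proving the purely algebraic identity
\begin{equation*}
\frac{1}{|\sigma|}\int_\sigma \omega \;=\; \frac{1}{|*\sigma|}\int_{*\sigma}\star\omega
\end{equation*}
for every constant $k$-form $\omega$ on $\mathbb{R}^n$, where $\sigma\in\Delta_k$ and $*\sigma\in\delC^I_{n-k}$.

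First, I would invoke Properties~2, 3, and 4 of Definition~\ref{defdualmesh}: $\sigma$ lies in a $k$-plane $P(\sigma)$, $*\sigma$ lies in an $(n-k)$-plane $P(*\sigma)$, and $P(\sigma)\perp P(*\sigma)$. Choose orthonormal vectors $v_1,\dots,v_k$ spanning $P(\sigma)$ in its orientation, and orthonormal vectors $w_1,\dots,w_{n-k}$ spanning $P(*\sigma)$ in its orientation, so that (by the dual-mesh orientation convention translated from \eqref{dual_orientation} with the dimensions swapped)
\begin{equation*}
(dv_1\wedge\cdots\wedge dv_k)\wedge(dw_1\wedge\cdots\wedge dw_{n-k}) = \vol.
\end{equation*}
This is the key orientation-bookkeeping step; it is where the property that the dual is oriented consistently with $\mathbb{R}^n$ enters, and I expect it to be the most delicate part.

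Next, expand $\omega = \sum_J a_J\, d\xi_J$ in the orthonormal basis dual to $(v_1,\dots,v_k,w_1,\dots,w_{n-k})$, where $J$ ranges over strictly increasing $k$-multi-indices. Integrating over $\sigma\subset P(\sigma)$ annihilates every $d\xi_J$ that contains any $w$-direction, since the pullback of such a form to $P(\sigma)$ vanishes; only $J_0=(1,\dots,k)$ (corresponding to $dv_1\wedge\cdots\wedge dv_k$) survives, giving $\int_\sigma\omega = a_{J_0}|\sigma|$. Symmetrically, $\star(dv_1\wedge\cdots\wedge dv_k) = dw_1\wedge\cdots\wedge dw_{n-k}$ by the orientation identity above, while $\star(d\xi_J)$ for $J\neq J_0$ contains at least one $dv_i$ factor and hence vanishes when pulled back to $*\sigma\subset P(*\sigma)$. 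Therefore $\int_{*\sigma}\star\omega = a_{J_0}|*\sigma|$, and dividing the two identities by $|\sigma|$ and $|*\sigma|$ respectively yields the required equality. Applying Lemma~\ref{lem:kernel} then finishes the proof.
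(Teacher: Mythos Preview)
Your proposal is correct and follows essentially the same approach as the paper: both reduce to Lemma~\ref{lem:kernel}, choose an orthonormal basis adapted to the orthogonal planes $P(\sigma)$ and $P(*\sigma)$ via Definition~\ref{defdualmesh}, expand the constant form in this basis, and observe that only the single coefficient corresponding to $dv_1\wedge\cdots\wedge dv_k$ survives in both integrals. The paper writes out the verification that $\star(dv_1\wedge\cdots\wedge dv_k)=dw_1\wedge\cdots\wedge dw_{n-k}$ slightly more explicitly using \eqref{orientationrelation}, but your orientation-bookkeeping step is the same computation in condensed form.
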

\begin{proof}
Let $\sig\in \Delta_k$ and $u\in \pol_0\Lambda^k (\mathcal{Z}_h^k)$.  By Property 4 in Definition \ref{defdualmesh}, there exists an orthonormal basis of $\R^n$ of vectors $t_1, \ldots, t_n$ such that $t_1, \dots, t_k$ span $P(\sig)$ and encode the orientation of $\sig$, and $t_{k+1}, \dots, t_n$ span $P(*\sig)$ and encode the orientation of $*\sig$ such that $\det[t_1, \dots, t_n]=1$ or equivalently 
\begin{equation}\label{wedges1}
    dy_1 \wedge\cdots\wedge dy_n = \vol,
\end{equation}
where $dy_1, \dots, dy_n$ is the dual basis to $t_1, \dots, t_n$. Moreover, since $t_1, \dots, t_k$ are orthonormal vectors which orient $\sig$ (similarly for $t_{k+1}, \dots, t_n$ and $*\sig$), 
\begin{equation}\label{wedges0}
    dy_1\wedge \cdots\wedge dy_k =  \vol_{\sig}, \quad dy_{k+1}\wedge\cdots\wedge dy_n = \vol_{*\sig},
\end{equation}
where $\vol_{\tau}$ indicates the volume form on $\tau$. 

Since $u\in \pol_0\Lambda^k (\mathcal{Z}_h^k)$, $u$ is constant on $\dc(\sigma)$. Hence, we can represent $u$ on $\dc(\sigma)$ as
\begin{equation*}
    u= \sum_{ I\in\Sigma(k,n)} a_{I} dy_I  \quad \text{ on } \dc(\sigma),
\end{equation*}
where $a_I \in \R$. Let $S=(1, \ldots, k)$. Then if $I\in\Sigma(k,n)$ such that $I \neq S$, it must be that there exists $\ell\in I$ such that $t_\ell$ is a vector that is orthogonal to $P(\sigma)$. Hence,    
\begin{equation*}
  \int_{\sigma} dy_I= 0 \quad \forall I\in\Sigma(k,n), \, I \neq S.  
\end{equation*}
Now, by \eqref{wedges0}, $dy_{S} = \vol_\sig$ so 
\begin{equation*}
  \frac{1}{|\sigma|}\int_{\sigma} u= \frac{a_S}{|\sigma|} \int_{\sigma} dy_{S} = a_S.
\end{equation*}
Now, for each $I\in\Sigma(k,n)$, let $I^*$ be such that $dy_{I^*} =\star dy_I$. In particular, this means that $S^* = (k+1, \dots, n)$. One can see this considering $\omega= dy_{S}$ in \eqref{orientationrelation}. Then, we have
\begin{equation*}
    \llangle \star dy_{S}, dy_{I} \rrangle \vol = dy_{S}\wedge dy_{I} = 0 \quad  \forall I\in\Sigma(n-k,n),\, I\neq(k+1, \dots, n),
\end{equation*}
and
\begin{equation*}
    \llangle \star dy_{S}, dy_{(k+1, \dots, n)} \rrangle \vol = dy_{S}\wedge dy_{(k+1, \dots, n)} = \vol,
\end{equation*}
by \eqref{wedges1}. Thus, we similarly have
\begin{equation*}
    \int_{*\sigma} dy_{I^*}=0 \quad  \quad \forall I\in\Sigma(k,n),\, I \neq S,
\end{equation*}
since when $I^*\neq S^*$, there exists $\ell\in I^*$ such that $t_\ell$ is orthogonal to $*\sigma$. 
Now, since $S^* = (k+1, \dots, n)$, we have $dy_{S^{*}} = \vol_{*\sig}$ by \eqref{wedges0} so
\begin{equation*}
  \frac{1}{|*\sigma|}\int_{*\sigma} \star u=\frac{a_{S}}{|*\sigma|} \int_{*\sigma} dy_{S^*}=a_S.
\end{equation*}
\end{proof}

Now, we define $\Pi_{\mathcal{Z}} v $ to be the $L^2\La^k$-orthogonal projection of $v$ onto $\pol_0\La^k(\mathcal{Z}_h^k)$ satisfying
\begin{equation*}
    ( \Pi_\mathcal{Z} u - u, q)_{\Omega} = 0, \quad \forall q\in\pol_0\La^k(\mathcal{Z}_h^k).
\end{equation*}
Note that for every $\sig\in\del_k$, $\Pi_\mathcal{Z} u|_{\dc(\sig)}$ is exactly the average of $u$ over $\dc(\sig)$.

Now, a consequence of Lemma \ref{kernel} is the following lemma.
\begin{lemma}\label{Pi-Jbounddn}
    For any smooth enough $k$-form $v$,
    \begin{equation}\label{Pi-Jbounddnineq}
        \dn{(\Pi-J) v}^2 \leq 2\left(\dn{v-\Pi_{\mathcal{Z}}v}^2 + \dn{\star(v-\Pi_{\mathcal{Z}}v)}_*^2\right).
    \end{equation}
\end{lemma}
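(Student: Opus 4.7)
The plan is to exploit Lemma \ref{kernel}, which tells us that piecewise constant $k$-forms on the diamond cell mesh lie in the kernel of $\Pi - J$. Since $\Pi_{\mathcal{Z}} v \in \pol_0 \La^k(\mathcal{Z}_h^k)$ by construction, linearity of $\Pi - J$ immediately gives
\begin{equation*}
(\Pi - J) v = (\Pi - J)(v - \Pi_{\mathcal{Z}} v).
\end{equation*}
This is the only place where the geometric/structural property of the dual mesh (orthogonality of $\sig$ and $*\sig$, encoded in Property 2 of Definition \ref{defdualmesh}) enters the argument; everything else is soft.

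Next, I would write $w := v - \Pi_{\mathcal{Z}} v$ and apply the elementary inequality $\dn{a-b}^2 \leq 2\dn{a}^2 + 2\dn{b}^2$ to $(\Pi - J)w$, yielding
\begin{equation*}
\dn{(\Pi - J) v}^2 \leq 2\dn{\Pi w}^2 + 2\dn{J w}^2.
\end{equation*}
The two terms on the right are then handled by the identities that have already been established: $\dn{\Pi w} = \dn{w}$ from \eqref{Pinormequal}, and $\dn{J w} = \dn{\star w}_*$ from \eqref{Jbound}. Substituting $w = v - \Pi_{\mathcal{Z}} v$ back in produces exactly \eqref{Pi-Jbounddnineq}.

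There is no real obstacle here; the proof is a short three-step bookkeeping argument (subtract off the kernel element, triangle inequality in squared form, apply the two norm identities). The only point to take care of is that $\Pi$ and $J$ must be well-defined on $v - \Pi_{\mathcal{Z}} v$, which holds because $\Pi_{\mathcal{Z}} v$ is constant on each diamond cell and therefore smooth enough on each diamond cell for both the canonical projections $\Pi$ and $\Pi_{\smp} \star$ (and hence $J = \star_h^{-1} \Pi_{\smp} \star$) to be applied, as was observed right after the definition of $\pol_p \La^k(\mathcal{Z}_h^k)$.
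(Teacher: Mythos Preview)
Your proposal is correct and follows essentially the same approach as the paper: subtract $\Pi_{\mathcal{Z}} v$ using Lemma \ref{kernel}, split via the triangle inequality (the paper applies it before squaring and then invokes Young's inequality, which is equivalent to your squared-form inequality), and finish with the identities \eqref{Pinormequal} and \eqref{Jbound}.
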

\begin{proof}
By Lemma \ref{kernel},
    \begin{alignat*}{1}
    \dn{(\Pi-J) v} &= \dn{(\Pi-J) (v-\Pi_{\mathcal{Z}}v)}\\
    &\leq \dn{\Pi (v-\Pi_{\mathcal{Z}}v)} + \dn{J (v-\Pi_{\mathcal{Z}}v)}\\
    &= \dn{v-\Pi_{\mathcal{Z}}v} + \dn{\star(v-\Pi_{\mathcal{Z}}v)}_*,
\end{alignat*}
where we used \eqref{Jbound} and \eqref{Pinormequal}. Squaring both sides and applying Young's inequality gives \eqref{Pi-Jbounddnineq}.
\end{proof}

We will also need the following scaled trace inequalities for smooth forms.
\begin{lemma}
    There exists a constant, $C_{\tr}$ independent of $h$ such that for any $\sig\in\del_k$, we have
    \begin{subequations}
    \begin{align}
        \|\tr_\sig u\|_{L^2(\sig)}^2&\leq C_{\tr}\left(h^{-(n-k)}\|u\|_{L^2(\dc(\sig))}^2 + \sum_{s=1}^{r_1}h^{2s-(n-k)}|u|_{H^s(\dc(\sig))}^2\right),\label{trace_ineq}\\
        \|\tr_{*\sig} v\|_{L^2(*\sig)}^2&\leq C_{\tr}\left(h^{-k}\|v\|_{L^2(\dc(\sig))}^2 + \sum_{s=1}^{r_2}h^{2s-k}|v|_{H^s(\dc(\sig))}^2\right),\label{trace_ineq2}
    \end{align}
    \end{subequations}
    where $u$ is a smooth enough $k$-form, $v$ is a smooth enough $(n-k)$-form, $r_1 := \lceil\frac{n-k}{2}+\varepsilon\rceil$, and $r_2 := \lceil\frac{k}{2}+\varepsilon\rceil$ for any $0<\varepsilon < 1$.
\end{lemma}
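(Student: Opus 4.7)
The plan is to prove both inequalities by a standard affine-scaling argument that reduces each to a codimension trace theorem on a reference diamond cell. By symmetry I will focus on \eqref{trace_ineq}; \eqref{trace_ineq2} follows identically after interchanging the roles of $\sigma$ and $*\sigma$, so that the relevant codimension becomes $k$ instead of $n-k$.

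First I reduce to scalar-valued functions. Writing $u = \sum_{I \in \Sigma(k,n)} a_I\,dx_I$, the $L^2$ and $H^s$ norms of $u$ on $\dc(\sigma)$ decouple into sums of the squared norms of the scalar coefficients $a_I$. After a rigid motion aligning $\sigma$ with $\mathrm{span}(e_1,\ldots,e_k)$, only a single component survives the trace: $\|\tr_\sigma u\|_{L^2(\sigma)}^2 = \|a_S\|_{L^2(\sigma)}^2$ with $S=(1,\ldots,k)$. Hence it suffices to establish the scalar analogue of \eqref{trace_ineq} for each $a_I$ separately.

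Next I invoke the reference trace inequality. Let $\Phi_\sigma : \dc(\sigma) \to \widehat{\mathcal{D}}_\sigma$ be the affine rescaling by $1/h_\sigma$; under the shape regularity of $\Th$ and the circumcentric construction of the diamond cells, the normalized domains $\widehat{\mathcal{D}}_\sigma$ form a precompact family of Lipschitz polyhedra, with reference simplex $\widehat{\sigma} := \Phi_\sigma(\sigma)$ of unit diameter. Since $r_1 = \lceil (n-k)/2 + \varepsilon\rceil > (n-k)/2$, the codimension-$(n-k)$ trace theorem, obtained for instance by iterating the classical codimension-$1$ trace $H^s \to H^{s-1/2}$ a total of $n-k$ times and concluding with the embedding $H^{1/2+\varepsilon} \hookrightarrow L^2$, yields for any scalar $\widehat{w} \in H^{r_1}(\widehat{\mathcal{D}}_\sigma)$
\begin{equation*}
\|\widehat{w}\|_{L^2(\widehat{\sigma})}^2 \;\leq\; \widehat{C}\, \sum_{s=0}^{r_1}|\widehat{w}|_{H^s(\widehat{\mathcal{D}}_\sigma)}^2 ,
\end{equation*}
with $\widehat{C}$ uniform in $\sigma$ by a compactness argument over the precompact class $\{\widehat{\mathcal{D}}_\sigma\}_{\sigma\in\Delta_k}$.

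Scaling back via $\Phi_\sigma$ introduces Jacobian factors $\|\widehat{w}\|_{L^2(\widehat{\sigma})}^2 = h_\sigma^{-k}\|w\|_{L^2(\sigma)}^2$ and $|\widehat{w}|_{H^s(\widehat{\mathcal{D}}_\sigma)}^2 = h_\sigma^{2s-n}|w|_{H^s(\dc(\sigma))}^2$. Rearranging gives $\|w\|_{L^2(\sigma)}^2 \le \widehat{C}\sum_{s=0}^{r_1} h_\sigma^{2s-(n-k)} |w|_{H^s(\dc(\sigma))}^2$, and combining with the local mesh-regularity comparability $h_\sigma \sim h$ (used throughout the paper in the same spirit as in Assumption \ref{PoincareAssumption}) produces the claimed bound. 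The main obstacle is securing the uniformity of $\widehat{C}$ across the reference family: this rests on showing that the normalized diamond cells lie in a precompact class of shape-regular Lipschitz domains on which the trace constant depends continuously on the geometry, a standard but nontrivial application of compactness under the shape regularity of $\Th$.
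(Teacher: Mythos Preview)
Your argument is sound in outline but takes a different route from the paper's. You rescale the entire diamond cell $\dc(\sigma)$ by a single dilation and then invoke a codimension-$(n-k)$ trace inequality on the resulting unit-size reference domain $\widehat{\mathcal{D}}_\sigma$, securing uniformity of $\widehat{C}$ by a compactness argument over the family $\{\widehat{\mathcal{D}}_\sigma\}_{\sigma\in\Delta_k}$. The paper instead exploits the built-in simplicial decomposition $\dc(\sigma)=\bigcup_{S}\tau_S$: each $n$-simplex $\tau_S$ is mapped affinely to the \emph{single} reference $n$-simplex $\widehat{\tau}$, the iterated trace inequality is applied there with a fixed constant, the scaling is undone using shape regularity of the $\tau_S$, and the pieces are summed. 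This buys two things. First, it eliminates the compactness argument entirely, since there is only one reference domain. Second, $\sigma$ is generically \emph{interior} to $\dc(\sigma)$ (for instance, it is a diagonal of the kite in the two-dimensional $k=1$ picture of Figure~\ref{fig:diamondcellmesh}), so your iterated codimension-one \emph{boundary} trace does not apply directly to the pair $(\widehat{\mathcal{D}}_\sigma,\widehat{\sigma})$; the paper's decomposition automatically places each piece $\sigma'=\tau_S\cap\sigma$ on $\partial\tau_S$, where the iterated boundary trace is immediate. Your approach can certainly be completed (split $\dc(\sigma)$ along $\sigma$ into subdomains having $\sigma$ on their boundary, or use an $H^{r_1}$-extension to $\mathbb{R}^n$ followed by the full-space restriction theorem), but the paper's simplex-by-simplex route is more concrete and avoids both soft spots. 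A minor point: your final step $h_\sigma\sim h$ is a quasi-uniformity assumption rather than shape regularity alone; the paper's proof has the same implicit reliance, so this is not a discrepancy between the two arguments.
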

\begin{proof}
    Now, we can write $\dc(\sig) = \cup_{i=1}^m \tau_i$ where each $\tau_i$ is one of the $\tau_S$ from the definition of the diamond cell $\dc(\sig)$. Then, we can define $\{\sig_j\}_{j=1}^\ell$ to be the set of intersections of $\tau_i$ with $\sig$. Let $\tau \in \{\tau_i\}_{i=1}^m$ and the intersection of its boundary with $\sig$ be denoted $\sig'$. Let $\widehat{\sig}$ denote the standard reference $k$-simplex, and $\widehat{\tau}$ denote the reference $n$-simplex.
    
    Next, it follows from iterated use of the trace theorem (e.g. \cite[Theorem 3.10]{ErnGuermondI21}) that for any $\widehat{w}\in H^{r}\La^k(\widehat{\tau})$,
    \begin{equation}\label{trace_ineq_reference}
        \|\tr_{\widehat{\sig}} \widehat{w}\|_{L^2(\widehat{\sig})}^2 \leq \widehat{C}\|\widehat{w}\|_{H^{r}(\widehat{\tau})}^2.
    \end{equation}
    Note that the result holds for $r=\frac{n-k}{2}+\varepsilon$, but we can take $r = \lceil\frac{n-k}{2}+\varepsilon\rceil$ due to the embedding of $H^{\lceil s \rceil}$ into $H^s$. While sharper estimates are possible, we do not investigate this here.
    
    Now, we let $\Phi$ be the invertible affine map which maps $\tau$ to $\widehat{\tau}$ and define $\widehat{u}=(\Phi^{-1})^{*}u$ where $\cdot^*$ denotes the pullback. Notice that $\tr_{\widehat{\sig}} \widehat{u} = (\Phi^{-1})^{*} \tr_{\sig} u$ (i.e. the trace commutes with $(\Phi^{-1})^{*}$). Moreover, by scaling we have that
    \begin{subequations}
        \begin{alignat}{1}
            \|\tr_\sig u\|_{L^2(\sig')} &\leq C_1 h^{-\frac{k}{2}}\|\tr_{\widehat{\sig}} \widehat{u} \|_{L^2(\widehat{\sig})},\label{scaling1}\\
            |\widehat{u}|_{H^s(\widehat{\tau})}&\leq C_2 h^{s+k-\frac{n}{2}} |u|_{H^s(\tau)},\label{scaling2}
        \end{alignat}
    \end{subequations}
    where $C_1, C_2$ are independent of $h$. Thus, using \eqref{scaling1}, \eqref{trace_ineq_reference}, and \eqref{scaling2}, we get
    \begin{align*}
        \|\tr_\sig u \|_{L^2(\sig')}^2 &\leq C_1 h^{-k}\|\tr_{\widehat{\sig}} \widehat{u} \|_{L^2(\widehat{\sig})}^2\leq C_1\widehat{C}h^{-k}\|\widehat{u}\|_{H^{r}(\widehat{\tau})}^2\\
        &\leq C_1\widehat{C}C_2 \left(h^{-(n-k)}\|u\|_{L^2(\tau)}^2 + \sum_{s=1}^{r}h^{2s-(n-k)}|u|_{H^s(\tau)}^2\right).
    \end{align*}
Since we have the above inequality for all $\tau$ and corresponding $\sigma'$, we in fact have
\begin{equation*}
    \sum_{j=1}^\ell\|\tr_\sig u \|_{L^2(\sig_j)}^2 \leq C\sum_{i=1}^m\left(h^{-(n-k)}\|u\|_{L^2(\tau_i)}^2 + \sum_{s=1}^{r}h^{2s-(n-k)}|u|_{H^s(\tau_i)}^2\right),
\end{equation*}
where we used shape-regularity to pull out the constant from the sum. This shows \eqref{trace_ineq}.

Following the same procedure, but now using $\{\sig_j\}$ to be the set of intersections of the $\tau_i$ with $*\sig$ yields \eqref{trace_ineq2}. Note in particular that each $\sig_j$ will be an $n-k$ simplex in this case.
\end{proof}

Finally, we have the following result on the boundedness of the difference $\Pi-J$ in the discrete norm. 
\begin{lemma}\label{Pi-Jboundlemma}
    Let $v$ be a smooth enough $k$-form for $\Pi-J$ to be well-defined. Then, there exists a constant, $C$ depending on the constant $C_P$ from Assumption \ref{PoincareAssumption}, but independent of $h$ such that 
    \begin{equation}\label{Pi-Jbound}
        \dn{(\Pi-J) v}^2 \leq C\sum_{s=1}^{r} h^{2s}|v|_{H^s(\Omega)}^2,
    \end{equation}
    where $r = \max\left(\lceil\frac{n-k}{2}+\varepsilon\rceil, \lceil\frac{k}{2}+\varepsilon\rceil\right)$ for any $0<\varepsilon < 1$. 
\end{lemma}
\begin{proof}
    By Lemma \ref{Pi-Jbounddn}, it suffices to estimate $\dn{v-\Pi_{\mathcal{Z}}v}^2$ and $\dn{\star(v-\Pi_{\mathcal{Z}}v)}_*^2$. One can readily verify that there exists $C_*$ independent of $h$ such that $|*\sig|\leq C_{*} h^{n-k}$ for all $\sig\in\del_k$.
    Using this fact and the trace inequality \eqref{trace_ineq}, letting $w=v-\Pi_{\mathcal{Z}}v$, we have
    \begin{alignat*}{1}
        \dn{w}^2 &= \sum_{\sig\in\del_k} a_\sig \left(\int_\sig w\right)^2 \leq \sum_{\sig\in\del_k} |*\sig| \|w\|_{L^2(\sig)}^2\\
        &\leq \sum_{\sig\in\del_k} C_* h^{n-k} C_{\tr}\left(h^{-(n-k)}\|w\|_{L^2(\dc(\sig))}^2 + \sum_{s=1}^{r_1}h^{2s-(n-k)}|w|_{H^s(\dc(\sig))}^2\right)\\
        &= \sum_{\sig\in\del_k}C\left(\|w\|_{L^2(\dc(\sig))}^2 + \sum_{s=1}^{r_1} h^{2s}|w|_{H^{s}(\dc(\sig))}^2\right),
    \end{alignat*}
    where $r_1 := \lceil\frac{n-k}{2}+\varepsilon\rceil$ for any $0<\varepsilon < 1$.
    
    Similarly, one can readily verify that there exists $C'$ independent of $h$ such that $|\sig|\leq C' h^{k}$ for all $\sig\in\del_k$. Using this fact and the trace inequality \eqref{trace_ineq2}, we have 
    \begin{alignat*}{1}
        \dn{\star w}_*^2 &= \sum_{\sig\in\del_k} b_\sig \left(\int_{*\sig} \star w\right)^2 \leq \sum_{\sig\in\del_k} |\sig| \|\star w\|_{L^2(*\sig)}^2\\
        &\leq \sum_{\sig\in\del_k} C' h^{k} C_{\tr}\left(h^{-k}\|\star w\|_{L^2(\dc(\sig))}^2 + \sum_{s=1}^{r_2}h^{2s-k}|\star w|_{H^s(\dc(\sig))}^2\right)\\
        &= \sum_{\sig\in\del_k}C\left(\|w\|_{L^2(\dc(\sig))}^2 + \sum_{s=1}^{r_2} h^{2s}|w|_{H^{s}(\dc(\sig))}^2\right),
    \end{alignat*}
    where $r_2 := \lceil\frac{k}{2}+\varepsilon\rceil$ for any $0<\varepsilon < 1$. We used the isometric property of the Hodge star in the final equality .
    
    Note that $v-\Pi_\mathcal{Z}v$ has the same regularity as $v$ on the diamond cells so the above computations are justified as long as $v$ is regular enough. Moreover, in both computations, we have that the constant is uniformly bounded by shape-regularity and so can be pulled out of the sum.

    Finally, by Assumption \ref{PoincareAssumption}, since $v-\Pi_\mathcal{Z}v$ has average zero over $\dc(\sig)$,
    \begin{equation*}
        \|v-\Pi_{\mathcal{Z}}v\|_{L^2(\dc(\sig))}\leq C_P h |v|_{H^1(\dc(\sig))}.
    \end{equation*}
    Thus, the $L^2(\dc(\sig))$ term on the right hand side of the previous computation can be absorbed into the $H^1(\dc(\sig))$ term, and since we assume the Poincar\'e constant is uniformly bounded, we can pull it out of the sum to arrive at the result.
\end{proof}

\section{Error Estimates}\label{sec:StabilityErrorEstimates}
In this section, we state and prove our main results on stability and error estimates for the DEC scheme. In particular, we present our main result in the theorem below.
\begin{theorem}\label{mainresult}
    Let $e_u := \Pi u - u_h$, $e_{\rho}:= \Pi \rho - \rho_h$, and $e_p := p-p_h$ where $\Pi$ is the canonical projection satisfying \eqref{canonicalprojection}, $u_h$, $\rho_h$ are the solutions to \eqref{mixedform}, $p_h=P_{\Hh_h} f$, and $u,\rho$ are the solutions to \eqref{continuousmixedform} with $p = P_{\Hh}f$. Then, there exists $C$ independent of $h$ such that
    \begin{equation}\label{eq:mainresult}
        \dn{e_u}^2 + \dn{de_u}^2 + \dn{e_\rho}^2 + \dn{d e_\rho}^2 + \dn{e_p}^2 \leq C \mathcal{E}(u),
    \end{equation}
    where
    \begin{equation*}
        \mathcal{E}(u) = \dn{(J-\Pi) u}^2 + \dn{(\Pi-J)d u}^2 + \dn{(\Pi-J) \delta u}^2+\dn{(J-\Pi) \delta d u}^2 +\dn{(\Pi-J)u_\delta}^2,
    \end{equation*}
    and $u_\delta$ is as in \eqref{hodgedecomposition}.
\end{theorem}

One can see right away that by applying Lemma \ref{Pi-Jboundlemma} to Theorem \ref{mainresult} above, we obtain the following explicit error estimates.
\begin{corollary}[Error estimates]\label{errorestimate}
    Let $e_u$, $e_{\rho}$, and $e_p$ as in Theorem \ref{mainresult}. Then, we have the estimates
    \begin{alignat*}{1}
        \dn{e_u}^2 + \dn{d e_u}^2+ &\dn{e_{\rho}}^2+ \dn{d e_\rho}^2 + \dn{e_p}^2 \\ 
        &\leq  C\sum_{s=1}^{r} h^{2s}\Big(|u|_{H^s(\Omega)}^2 + |du|_{H^s(\Omega)}^2 + |\delta u|_{H^s(\Omega)}^2 + |\delta du|_{H^s(\Omega)}^2 + |u_\delta|_{H^s(\Omega)}^2 \Big),
    \end{alignat*}
    where $r = \max\left(\lceil\frac{n-k}{2}+\varepsilon\rceil, \lceil\frac{k}{2}+\varepsilon\rceil\right)$ for any $0<\varepsilon < 1$ and $C$ is independent of $h$. 
\end{corollary}
The estimates in Corollary \ref{errorestimate} are new for $k\geq 1$. In particular, Corollary \ref{errorestimate} implies that the errors converge at a rate of $\mathcal{O}(h)$. 

\subsection{Error equations}
In order to prove Theorem \ref{mainresult} we first need to write the error equations which requires the next crucial lemma.  
\begin{lemma}
 It holds,  
\begin{subequations}
\begin{alignat}{1}
\Pi \rho-\delta_h \Pi u&= T_1, \label{aux1} \\
   d \Pi \rho+  \delta_h d \Pi u +\Pi p &=\Pi f + T_2, \label{aux2}
\end{alignat}
\end{subequations}
where 
\begin{equation*}
    T_1 := (\Pi-J) \delta u + \delta_h  (J-\Pi) u,  \quad T_2 := (J-\Pi)  \delta d u + \delta_h  (\Pi-J)d  u.
\end{equation*}

\end{lemma}

\begin{proof}
    
We start with \eqref{aux1}. We take the canonical projection of \eqref{cmf1} to get that
\begin{alignat*}{1}
0&=\Pi \rho-\Pi \delta u \\
&= \Pi \rho-J \delta u -(\Pi-J) \delta u \\
&= \Pi \rho- \delta_h  J u -(\Pi-J) \delta u \\
&= \Pi \rho- \delta_h  \Pi u -(\Pi-J) \delta u - \delta_h  (J-\Pi) u,
\end{alignat*}
where we used \eqref{commuteJdelta}.

Now to \eqref{aux2}. We take the canonical projection of \eqref{cmf2} to get that
\begin{alignat*}{1}
\Pi f &= \Pi d \rho+ \Pi \delta d u + \Pi p  \\
&=d \Pi \rho + J \delta d u + \Pi p + (\Pi-J)  \delta d u \\
&=d \Pi \rho + \delta_h J d u + \Pi p + (\Pi-J)  \delta d u \\
&=d \Pi \rho + \delta_h  d \Pi u + \Pi p + (\Pi-J)  \delta d u + \delta_h(J- \Pi)du,
\end{alignat*}
where we used \eqref{commutePid} and \eqref{commuteJdelta}.
\end{proof}

Recall we denote the errors as $e_{\rho}=\Pi\rho -\rho_h$, $e_u= \Pi u -u_h$, and $e_p= \Pi p - p_h$. Then, subtracting \eqref{mf1} from \eqref{aux1} and \eqref{mf2} from \eqref{aux2}, we can see that the errors satisfy
\begin{subequations}
\begin{alignat}{1}
e_{\rho}-\delta_h e_u&= T_1, \label{err812} \\
   d e_{\rho}+  \delta_h d e_u + e_p&=  T_2. \label{err813}
\end{alignat}
\end{subequations}
Moreover, using \eqref{mf3}, we can see that 
\begin{equation*}
    0 = P_{\Hh_h^k} u_h = P_{\Hh_h^k}(u_h - \Pi u) + P_{\Hh_h^k} \Pi u,\\
\end{equation*}
and so we also have that $e_u$ satisfies
\begin{equation}\label{err814}
    P_{\Hh_h^k} e_u = T_3, \quad \text{where } T_3 = P_{\Hh_h^k} \Pi u.
\end{equation}

Applying the DEC inner product with test forms, $\tau_h$, $v_h$, and $q_h$, and using \eqref{adjunction} yields
\begin{subequations}\label{errorequations}
\begin{alignat}{1}
    \llb e_{\rho}, \tau_h\rrb-  \llb e_u, d \tau_h\rrb &= \llb T_1, \tau_h\rrb, \qquad \forall \tau_h \in V_h^{k-1},\label{erroreq1}\\
    \llb d e_{\rho}, v_h\rrb + \llb d  e_u, d v_h\rrb + \llb e_p, v_h\rrb &=  \llb T_2, v_h\rrb, \qquad \forall  v_h \in V_h^{k},\label{erroreq2}\\
    \llb e_u, q_h \rrb &= \llb T_3, q_h\rrb, \qquad \forall q_h\in \Hh_h^k. \label{erroreq3}
\end{alignat}    
\end{subequations}

In particular, we arrive at the error equations
\begin{equation}\label{errorequations2}
    B((e_\rho, e_u, e_p), (\tau, v, q))= \llb T_1, \tau\rrb + \llb T_2, v\rrb + \llb T_3, q_h\rrb.
\end{equation}
where 
\begin{equation*}
    B((\rho, u, p),(\tau, v, q)) := \llb \rho, \tau\rrb -\llb u, d\tau\rrb + \llb d\rho, v\rrb +\llb du, dv\rrb + \llb p, v\rrb +\llb u, q\rrb.
\end{equation*}
\subsection{Discrete Stability}
We now provide a stability result for the DEC scheme.
\begin{theorem}[Discrete stability]\label{discstab}
    For any $(\rho, u, p) \in V_h^{k-1} \times V_h^k\times \Hh_h^k$, there exists $(\tau, v, q)\in V_h^{k-1}\times V_h^k\times \Hh_h^k$ such that
    \begin{subequations}
        \begin{alignat}{1}
            B((\rho, u, p),(\tau, v, q))&\geq \gamma \left(\dn{\rho}_{H_d}^2 + \dn{u}_{H_d}^2 + \dn{p}^2\right),\label{disc_stab1}\\
            \dn{\tau}_{H_d}+\dn{v}_{H_d}+ \dn{q}&\leq M\left(\dn{\rho}_{H_d}+\dn{u}_{H_d} + \dn{p}\right),\label{disc_stab2}
        \end{alignat}
    \end{subequations}
    where the constants $\gamma > 0$, and $M<\infty$ are independent of $h$.  
\end{theorem}

We prove Theorem \ref{discstab} generally following \cite[Section 7]{Arnold2006} with some modifications accounting for the DEC scheme. In particular, the key tool is a discrete Poincar\'e inequality (using the discrete norms).

We begin by noting that the discrete norms are norm equivalent to the $L^2(\Omega)$ norm on the Whitney forms under Assumption \ref{assump:UWC}. We state this in the following lemma without proof as the result follows from a straightforward scaling argument.
\begin{lemma}[Norm equivalence]
There exists constants $c_{\mathsf{ne}}$ and $C_{\mathsf{ne}}$ independent of $h$ such that for any $u_h\in V_h^k$, 
    \begin{equation}\label{normequivalence}
        c_{\mathsf{ne}}^{-1}\dn{u_h}\leq \|u_h\|_{L^2(\Omega)} \leq C_{\mathsf{ne}}\dn{u_h}.
    \end{equation}
\end{lemma}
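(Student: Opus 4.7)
The plan is to reduce the global norm equivalence to a local one on each primal simplex $T \in \mathcal{T}_h$, which then follows from a standard Bramble--Hilbert / scaling argument applied to the finite-dimensional space $V_h^k|_T$.

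First I would expand $u_h = \sum_{\sigma \in \Delta_k} u_\sigma \phi_\sigma$ with $u_\sigma = \int_\sigma \tr_\sigma u_h$, so that
\begin{equation*}
\dn{u_h}^2 = \sum_{\sigma \in \Delta_k} \frac{|*\sigma|}{|\sigma|}\, u_\sigma^2.
\end{equation*}
For each $T \in \mathcal{T}_h$, let $\Delta_k(T) \subset \Delta_k$ be the $k$-subsimplices of $T$. The key local estimate I would establish is
\begin{equation*}
c_1\, h_T^{n-2k} \sum_{\sigma \in \Delta_k(T)} u_\sigma^2 \;\le\; \|u_h\|_{L^2(T)}^2 \;\le\; c_2\, h_T^{n-2k} \sum_{\sigma \in \Delta_k(T)} u_\sigma^2,
\end{equation*}
with $c_1,c_2 > 0$ depending only on the shape regularity constant $C_S$. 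This is obtained by pulling $T$ back to a reference simplex $\widehat T$ via the standard affine map $F_T$: the transformed Whitney coefficients $\widehat u_\sigma$ relate to $u_\sigma$ by a power of $h_T^k$ (integration over a $k$-face), the transformed $\widehat u_h$ relates to $u_h$ in $L^2$ by a power of $h_T^{n/2}$, and on $\widehat T$ all norms on the finite-dimensional space of Whitney $k$-forms are equivalent with fixed constants.

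Second, I would handle the weights $|*\sigma|/|\sigma|$. Shape regularity of $\mathcal{T}_h$ together with the well-centered/circumcentric construction of the dual mesh yields $|\sigma| \eqsim h_\sigma^k$ and $|*\sigma| \eqsim h_\sigma^{n-k}$, and $h_\sigma \eqsim h_T$ for each $T \supset \sigma$, so $|*\sigma|/|\sigma| \eqsim h_T^{n-2k}$ with constants depending only on $C_S$. Summing the local estimate over $T$ and using that each $\sigma \in \Delta_k$ is shared by a bounded number of elements (again by shape regularity) gives
\begin{equation*}
\|u_h\|_{L^2(\Omega)}^2 = \sum_T \|u_h\|_{L^2(T)}^2 \;\eqsim\; \sum_T h_T^{n-2k} \sum_{\sigma \in \Delta_k(T)} u_\sigma^2 \;\eqsim\; \sum_{\sigma \in \Delta_k} \frac{|*\sigma|}{|\sigma|}\, u_\sigma^2 = \dn{u_h}^2.
\end{equation*}

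The main obstacle is the scaling estimate $|*\sigma| \eqsim h_\sigma^{n-k}$ for the dual cell. While $|\sigma| \eqsim h_\sigma^k$ is a direct consequence of primal shape regularity, bounding $|*\sigma|$ from below requires the circumcenters of the simplices in $\mathcal{K}(\sigma)$ to be well-separated, which is where well-centeredness and shape regularity of $\mathcal{T}_h$ jointly enter. Once this geometric fact is in hand, the rest of the argument is entirely mechanical.
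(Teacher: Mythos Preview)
Your proposal is correct and is precisely the ``straightforward scaling argument'' the paper alludes to without providing details; you have faithfully fleshed out that argument, including the pullback to a reference simplex for the local equivalence and the handling of the weights $|*\sigma|/|\sigma|$ via shape regularity. Your identification of the lower bound $|*\sigma|\gtrsim h_\sigma^{n-k}$ as the nontrivial geometric input is apt---the paper does not isolate this point explicitly, but it is indeed where well-centeredness (in a uniform sense) must enter.
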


We now define the discrete spaces:
\begin{equation*}
    \Zz V_h^k:=\{u\in V_h^k: du = 0\},
\end{equation*}
and
\begin{equation*}
    \Zz V_h^{k,\perp}:=\{u\in V_h^k: \llb u, v\rrb = 0, \quad \forall v\in \Zz V_h^k\}.
\end{equation*}
Note that the space $\Zz V_h^{k,\perp}$ differs from the standard orthogonal complement space in FEEC (e.g. \cite[Section 5.6]{Arnold2006}) as the orthogonality is now with respect to the inner product $\llb\cdot,\cdot\rrb$ rather than the $L^2\La^k$-inner product.

Let $\overline{\Pi}$ be a $H\La^k(\Omega)$-bounded commuting projection (see for example \cite{Arnold21, ChristiansenWinther08, Ern2025, Falk2014}) satisfying the following for all  $z\in H\La^k(\Omega)$.
\begin{subequations}
    \begin{alignat}{2}
        \|\overline{\Pi}z\|_{H_d}&\leq C_{\overline{\Pi}}\|z\|_{H_d},\label{L2bddproj}\\
        d\overline{\Pi}z &= \overline{\Pi}dz. \label{Projcommute}
    \end{alignat}
\end{subequations}

Also, recall the Poincar\'e inequality \cite[Theorem 2.2]{Arnold2006}:
\begin{equation}\label{continuousPoinc}
    \|u\|\leq \tilde{C}_P \|du\| \qquad \forall u\in (\Zz H\La^k)^{\perp}.
\end{equation}

We now have the following discrete Poincar\'e inequality.
\begin{lemma}[Discrete Poincar\'e Inequality]
    For any $u\in \Zz V_h^{k,\perp}$, it holds,
    \begin{equation}\label{eq:discretePoinc}
        \dn{u}_{H_d} \leq C_{P,h}\dn{du},
    \end{equation}
    where $C_{P,h}=c_\mathsf{ne} C_{\overline{\Pi}} \tilde{C}_P C_\mathsf{ne}$.
\end{lemma}
\begin{proof}
    Let $u\in \Zz V_h^{k,\perp}$. Let $z$ be the $L^2$-projection of $u$ onto the space $(\Zz H\La^k)^{\perp}$. Then, $z\in (\Zz H\La^k)^{\perp}$ is such that $dz = du$. Hence, by \eqref{Projcommute}, $d \overline{\Pi} z=\overline{\Pi} dz=du$. Therefore, $u-\overline{\Pi} z \in \Zz V_h^k$ and, by our hypothesis of $u$, $\llb u, u-\overline{\Pi} z\rrb=0$ and in fact $\llb u, u-\overline{\Pi} z\rrb_{H_d}=0$.    

Now, we can see that
\begin{alignat*}{1}
    \dn{u}_{H_d}^2 = \llb u, u\rrb_{H_d} = \llb u, u-\overline{\Pi}z\rrb_{H_d} + \llb u, \overline{\Pi}z\rrb_{H_d} = \llb u, \overline{\Pi}z\rrb_{H_d} \leq \dn{u}_{H_d}\dn{\overline{\Pi}z}_{H_d}.
\end{alignat*}

Thus, using the norm equivalence \eqref{normequivalence} and \eqref{L2bddproj}, we have that
\begin{alignat*}{1}
    \dn{u}_{H_d}\leq \dn{\overline{\Pi}z}_{H_d}\leq c_{\mathsf{ne}} \|\overline{\Pi}z\|_{H_d} \leq c_{\mathsf{ne}} C_{\overline{\Pi}}\|z\|_{H_d}  \leq c_{\mathsf{ne}} C_{\overline{\Pi}} \tilde{C}_P\|dz\|_{L^2(\Omega)},
\end{alignat*}
and we conclude by noting that
\begin{alignat*}{1}
\|dz\|_{L^2(\Omega)} =\|du\|_{L^2(\Omega)} \leq C_{\mathsf{ne}}\dn{du}.
\end{alignat*}

\end{proof}

Next, we recall the following spaces:
\begin{equation*}
    \Bh_h^k:=d V_h^{k-1}, \qquad \Hh_h^k=\{w\in V_h^k: dw = 0, \llb w,dv\rrb = 0, \: \forall v\in V_h^{k-1}\}.
\end{equation*}

Then, we have the following discrete Hodge decomposition.
\begin{lemma}[Discrete Hodge Decomposition]
    Let $u\in V_h^k$. Then, we can decompose $u$ as
    \begin{equation*}
        u = \phi + \psi + \theta
    \end{equation*}
    where $\phi\in \Bh_h^k$, $\psi\in \Hh_h^k$, and $\theta\in \Zz V_h^{k,\perp}$. 
\end{lemma}
\begin{proof}
    First, since $\Zz V_h^k$ is a closed subspace of $V_h^k$, we can write $V_h^k = \Zz V_h^k \oplus \Zz V_h^{k,\perp}$. Similarly, we can write $\Zz V_h^k = \Bh_h^k \oplus (\Bh_h^k)^\perp$ since $\Bh_h^k$ is a closed subspace of $\Zz V_h^k$ where $(\Bh_h^k)^\perp$  is the orthogonal complement of $\Bh_h^k$ with respect to the DEC inner product. Finally, one readily sees that $(\Bh_h^k)^\perp = \Hh_h^k$ by the definition.
\end{proof}
 
Finally, we can prove the stability result. The proof follows exactly that of \cite[Section 7.5]{Arnold2006}, but now with the DEC inner products and norms.

\begin{proof}[Proof of Theorem \ref{discstab}]
    We begin by applying the discrete Hodge decomposition to $u\in V_h^k$ to write $u = \phi + \psi + \theta$ for $\phi \in \Bh_h^k$, $\psi\in \Hh_h^k$, and $\theta\in \Zz V_h^{k,\perp}$. In particular, note that $\phi$, $\psi$, and  $\theta$ are pairwise orthogonal under the DEC inner product. Then, $\dn{u}^2 = \dn{\phi}^2+ \dn{\psi}^2+\dn{\theta}^2$. Moreover, $\phi = d\mu$ for some $\mu\in \Zz V_h^{k-1, \perp}$ and $d\theta = du$. Thus, by the discrete Poincar\'e inequality \eqref{eq:discretePoinc}, we can see that
    \begin{equation}\label{aux:Poincares}
        \dn{\mu}_{H_d}\leq C_{P,h} \dn{\phi}, \qquad \dn{\theta}_{H_d}\leq C_{P,h}\dn{du}.
    \end{equation}
    Now, considering $\tau = \rho - t\mu$ for $t:=\frac{1}{C_{P,h}^2}$, $v=u+d\rho+p$, and $q=\psi-p$ we can see that
    \begin{alignat*}{1}
        B((\rho, u, p),(\tau, v, q)) &= \dn{\rho}^2-t\llb \rho, \mu\rrb +t\llb u, d\mu\rrb +\dn{d\rho}^2 + \dn{du}^2 +\dn{p}^2 + \llb u, \psi\rrb \\
        &=\dn{\rho}^2 +\dn{d\rho}^2 + \dn{du}^2 + \dn{p}^2 - t\llb \rho, \mu\rrb + t\llb u, \phi\rrb + \dn{\psi}^2\\
        &=\dn{\rho}^2 +\dn{d\rho}^2 + \dn{du}^2 + \dn{p}^2 - t\llb \rho, \mu\rrb + t\dn{\phi}^2 + \dn{\psi}^2,
    \end{alignat*}
    where we used that $\theta$ and $\phi$ are orthogonal under the DEC inner product in the last equality. Now, notice that by Young's inequality and \eqref{aux:Poincares},
    \begin{alignat*}{1}
        - t\llb \rho, \mu\rrb & \geq -\frac{1}{2}\dn{\rho}^2 -\frac{1}{2}t^2\dn{\mu}^2 \geq -\frac{1}{2}\dn{\rho}^2 -\frac{1}{2}t^2 C_{P,h}^2\dn{\phi}^2,
    \end{alignat*}
    and so we can estimate again using \eqref{aux:Poincares},
    \begin{alignat*}{1}
        B((\rho, u,p),(\tau, v,q)) &\geq \frac{1}{2}\dn{\rho}^2 +\dn{d\rho}^2 + \dn{du}^2 + \dn{p}^2 + \dn{\psi}^2 + \left(t-t^2\frac{C_{P,h}^2}{2}\right)\dn{\phi}^2\\
        &\geq \frac{1}{2}\dn{\rho}^2 +\dn{d\rho}^2 + \frac{1}{2}\dn{du}^2 + \dn{p}^2 + \dn{\psi}^2 + \frac{1}{2C_{P,h}^2}\dn{\phi}^2 + \frac{1}{2C_{P,h}^2}\dn{\theta}^2\\
        &\geq \frac{1}{2}\dn{\rho}^2 +\dn{d\rho}^2 + \frac{1}{2}\dn{du}^2 + \dn{p}^2 + \tilde{C}\dn{u}^2,
    \end{alignat*}
    where $\tilde{C} = \min\left(1, \frac{1}{2 C_{P,h}^2}\right)$. Thus, we obtain \eqref{disc_stab1} with $\gamma$ only depending on $C_{P,h}$. Now, we can estimate
\begin{alignat*}{1}
    \dn{\tau}_{H_d}+\dn{v}_{H_d} + \dn{q} &\leq \dn{\rho}_{H_d}+t\dn{\mu}_{H_d} + \dn{u}_{H_d} + \dn{d\rho}+ 2\dn{p}+\dn{\psi}\\
    &\leq \sqrt{2}\dn{\rho}_{H_d} + \frac{1}{C_{P,h}}\dn{\phi}+\dn{u}_{H_d}+ 2\dn{p}+\dn{\psi}.
\end{alignat*}
Thus, noting that $\dn{\phi}\leq\dn{u}$ and $\dn{\psi}\leq\dn{u}$, we readily obtain \eqref{disc_stab2} with $M$ only dependent on $C_{P,h}$.
    
\end{proof}

\subsection{Proof of Theorem \ref{mainresult}}
The error equations \eqref{errorequations}, along with Theorem \ref{discstab} allow us to now prove Theorem \ref{mainresult}.
\begin{proof}
Using \eqref{disc_stab1} and \eqref{disc_stab2}, we can see that there exists $\tau\in V_h^{k-1}$, $v\in V_h^k$, and $q\in \Hh_h^k$ such that
\begin{alignat*}{1}
    B((e_\rho, e_u, e_p), (\tau, v, q)) &\geq \gamma \left(\dn{e_\rho}_{H_d}^2 + \dn{e_u}_{H_d}^2 + \dn{e_p}^2\right)\\ 
    &\geq \frac{\gamma}{2 M} \left(\dn{e_\rho}_{H_d} + \dn{e_u}_{H_d}+\dn{e_p}\right) \left(\dn{\tau}_{H_d} + \dn{v}_{H_d}+\dn{q}\right). 
\end{alignat*}

Hence, using \eqref{errorequations2}, we have that
\begin{equation}\label{aux1001}
    \dn{e_\rho}_{H_d} + \dn{e_u}_{H_d} + \dn{e_p} \leq \frac{2M}{\gamma} \left(\sup_{\tau_h\in V_{h}^{k-1}}\frac{\llb T_1, \tau_h\rrb}{\dn{\tau_h}_{H_d}} + \sup_{v_h\in V_h^k}\frac{\llb T_2, v_h\rrb}{\dn{v_h}_{H_d}} + \sup_{q_h\in \Hh_h^k}\frac{\llb T_3, q_h\rrb}{\dn{q_h}}\right).
\end{equation}

Now, using \eqref{adjunction}, and the Cauchy-Schwarz inequality, we can see that for any $\tau_h\in V_h^{k-1}$,
\begin{alignat*}{1}
    \llb T_1, \tau_h\rrb &= \llb (\Pi-J)\delta u, \tau_h\rrb + \llb \delta_h(J-\Pi) u, \tau_h\rrb = \llb (\Pi-J)\delta u, \tau_h\rrb + \llb (J-\Pi) u, d\tau_h\rrb\\
    &\leq \dn{ (\Pi-J)\delta u} \dn{\tau_h} + \dn{(J-\Pi) u}\dn{d\tau_h},
\end{alignat*}
and for any $v_h\in V_h^k$,
\begin{alignat*}{1}
    \llb T_2, v_h\rrb &= \llb (J-\Pi)\delta d u, v_h\rrb + \llb \delta_h(\Pi-J)du, v_h\rrb = \llb (J-\Pi)\delta d u, v_h\rrb + \llb (\Pi-J)du, dv_h\rrb\\
    &\leq \dn{(J-\Pi)\delta d u}\dn{v_h}+\dn{(\Pi-J)du}\dn{dv_h}.
\end{alignat*}
Next, note that by \eqref{continuousstrongform}, we know $u\perp \Hh^k$. Hence, by \eqref{hodgedecomposition}, we can write $u = u_d+u_\delta$ where $u_d = d\mu$ for some $\mu\in H\La^{k-1}$ and $u_\delta = \delta \eta$ for some $\eta \in \mathring{H}_\delta\La^{k+1}$. Then, $T_3 = P_{\Hh_h^k} \Pi u = P_{\Hh_h^k}\Pi u_{d} + P_{\Hh_h^k}\Pi u_\delta$, and we can compute using \eqref{adjunction} for any $q_h\in\Hh_h^k$,
\begin{equation*}
    \llb P_{\Hh_h^k}\Pi u_{d}, q_h\rrb = \llb \Pi u_d, q_h\rrb = \llb d\Pi \mu, q_h\rrb = \llb \Pi\mu, \delta_h q_h\rrb = 0.
\end{equation*}
On the other hand, we can see that using \eqref{commuteJdelta}
\begin{equation*}
    \Pi u_\delta = \Pi \delta \eta = (\Pi-J)\delta\eta + J\delta\eta = (\Pi-J)u_\delta + \delta_h J \eta,
\end{equation*}
and so using \eqref{adjunction}, we have that
\begin{alignat*}{1}
    \llb P_{\Hh_h^k}\Pi u_{\delta}, q_h\rrb = \llb \Pi u_\delta, q_h\rrb = \llb(\Pi-J)u_\delta, q_h\rrb + \llb \delta_h J\eta, q_h\rrb& = \llb(\Pi-J)u_\delta, q_h\rrb + \llb J\eta, dq_h\rrb\\
    &= \llb(\Pi-J)u_\delta, q_h\rrb.
\end{alignat*}

Hence, for any $q_h\in\Hh_h^k$, we can see that
\begin{equation*}
    \llb T_3, q_h\rrb = \llb(\Pi-J)u_\delta, q_h\rrb\leq \dn{(\Pi-J)u_\delta} \dn{q_h}.
\end{equation*}

Thus, applying these three bounds to \eqref{aux1001} implies that
\begin{alignat*}{1}
    \dn{e_\rho}_{H_d} + \dn{e_u}_{H_d} +\dn{e_p} \leq \frac{2M}{\gamma}\Big(\dn{ (\Pi-J)\delta u}& + \dn{(J-\Pi) u} + \dn{(J-\Pi)\delta d u}\\
    &+ \dn{(\Pi-J)du} + \dn{(\Pi-J)u_\delta}\Big),
\end{alignat*}
which in turn implies \eqref{eq:mainresult}.
\end{proof}

\begin{remark}\label{rem:cochain_analysis}
    In fact, the entirety of the analysis of DEC which we have presented can be conducted only considering cochains and directly using the DEC scheme on cochains presented in \eqref{DECscheme}. In particular, this means that the generalized Whitney forms are not necessary for the proof of convergence of DEC. We briefly give the idea of this analysis. First, one can define $\mathcal{J}:=\ccstar^{-1} \dr \star$. Then, rather than estimating $\dn{\Pi-J}$, one would estimate $\dr-\mathcal{J}$ in the norm induced by the cochain inner product \eqref{def:cochain_inner_product}. The kernel of $\dr-\mathcal{J}$ is naturally the same as the kernel of $\Pi-J$ and so all of the same estimates hold. Hence, one can simply repeat all the above analyses by replacing the appropriate objects with their cochain counterparts ($\Pi$ becomes $\dr$, $J$ becomes $\mathcal{J}$, $\dn{\cdot}$ becomes the norm induced by the cochain inner product, etc.). 
\end{remark}

\section{Superconvergence}\label{sec:superconvergence}
In our numerical experiments, we notice higher convergence rates on certain symmetric meshes and manufactured solutions. In an effort to explain this phenomena, we show the idea of how one can obtain superconvergent error estimates in our framework under specific mesh conditions and assumptions on the solution.

 We begin by defining the subset of diamond cells corresponding to boundary simplices as
\begin{equation*}
    \overline{\mathcal{Z}}_h^k:= \{\dc(\sig):\sig\in\del_k, \sig\subset\partial\Omega\}.
\end{equation*}
Then, we define 
\begin{equation*}
    \overline{\pol}_1 \La^k (\mathcal{Z}_h^k):= \{u\in \pol_1\La^k(\mathcal{Z}_h^k): u|_S \equiv 0, \forall S\in \overline{\mathcal{Z}}_h^k\},
\end{equation*}
and let $\overline{\Pi}_{\mathcal{Z}} v $ be the $L^2\La^k$ projection of a $k$-form $v$ onto $\overline{\pol}_1\La^k(\mathcal{Z}_h^k)$. 

We also define the space:
\begin{equation*}
    \mathring{H}^2\La^k(\Omega):=\left\{\sum_{I\in\Sigma(k,n)}a_Idx_I: a_I\in \mathring{H}^2(\Omega)\right\},
\end{equation*}
where $\mathring{H}^2(\Omega)$ is the space of $H^2(\Omega)$-regular functions such that the function and its first derivatives vanish on $\partial\Omega$.

Then, if $v\in \mathring{H}^2\La^k(\Omega)$, one has 
\begin{equation*}
    \|v-\overline{\Pi}_{\mathcal{Z}} v\|_{L^2(\dc(\sig))} + h|v-\overline{\Pi}_{\mathcal{Z}} v|_{H^1(\dc(\sig))}  = \|v\|_{L^2(\dc(\sig))} + h|v|_{H^1(\dc(\sig))} \leq Ch^2\|v\|_{H^2(\dc(\sig))},
\end{equation*}
for all $\sig\in\del_k$ such that $\sig\subset\partial\Omega$, and one can use a Bramble-Hilbert argument on the other diamond cells to establish
\begin{equation}\label{BHsuper}
    \|v-\overline{\Pi}_{\mathcal{Z}}v\|_{L^2(\Omega)} + h|v-\overline{\Pi}_{\mathcal{Z}}v|_{H^1(\Omega)} \leq Ch^2|v|_{H^2(\Omega)}, \quad v\in \mathring{H}^2\La^k(\Omega).
\end{equation}

Now, the key idea is that if $\overline{\pol}_1\La^k(\mathcal{Z}_h^k)\subset \Kern(\Pi-J)$, then we have that
    \begin{alignat*}{1}
    \dn{(\Pi-J) v} &= \dn{(\Pi-J) (v-\overline{\Pi}_{\mathcal{Z}}v)}\leq \dn{v-\overline{\Pi}_{\mathcal{Z}}v} + \dn{\star(v-\overline{\Pi}_{\mathcal{Z}}v)}_*,
\end{alignat*}
where we used \eqref{Jbound} and \eqref{Pinormequal}. Therefore, if we follow the same proof as Lemma \ref{Pi-Jbound}, but now apply \eqref{BHsuper} instead of the Poincar\'e inequality at the last step, we obtain
\begin{equation}\label{Pi-Jboundsuper}
    \dn{(\Pi-J) v}^2 \leq C\sum_{s=2}^{r} h^{2s}|v|_{H^s(\Omega)}^2, \quad v\in \mathring{H}^2\La^k(\Omega).
\end{equation}
where $r = \max\left(\lceil\frac{n-k}{2}+\varepsilon\rceil, \lceil\frac{k}{2}+\varepsilon\rceil\right)$ for any $0<\varepsilon < 1$.

Then, simply applying this result to Theorem \ref{mainresult} yields the following.
\begin{corollary}[Superconvergent Error Estimates]\label{corollarysuper}
    If $\overline{\pol}_1\La^k(\mathcal{Z}_h^k)\subset \Kern(\Pi-J)$ and $u,du,\delta u,\delta d u, u_\delta\in \mathring{H}^2\La^k(\Omega)$, then    
    \begin{alignat*}{1}
        \dn{e_u}^2 + \dn{d e_u}^2+ &\dn{e_{\rho}}^2+ \dn{d e_\rho}^2 + \dn{e_p}^2 \\ &\leq  C\sum_{s=2}^{r} h^{2s}\Big(|u|_{H^s(\Omega)}^2 + |du|_{H^s(\Omega)}^2 + |\delta u|_{H^s(\Omega)}^2 + |\delta du|_{H^s(\Omega)}^2 + |u_\delta|_{H^s(\Omega)}^2 \Big),
    \end{alignat*}
    where $r = \max\left(\lceil\frac{n-k}{2}+\varepsilon\rceil, \lceil\frac{k}{2}+\varepsilon\rceil\right)$ for any $0<\varepsilon < 1$ and $C$ is independent of $h$. 
\end{corollary}

We now give geometric conditions for $\overline{\pol}_1\La^k(\mathcal{Z}_h^k)\subset \Kern(\Pi-J)$ to hold. Let $\sig\in\del_k$. Then, we let $t_1, \ldots, t_n$ be the orthonormal basis of $\R^n$ introduced in the proof of Lemma \ref{lem:kernel} and $dy_1, \dots, dy_n$ be the dual basis of $t_1, \dots, t_n$. 
Now, letting $\bld{z} = (z_1,\dots,z_n)$ be the vector of coordinates corresponding to $e_1,\dots, e_n$, then the coordinates corresponding to $t_1, \dots, t_n$ are $y_1, \dots, y_n$ where $y_i= \bld{z}\cdot t_i$, and letting $O$ be a point of intersection between $P(\sig)$ and $P(*\sig)$, we can write $P(\sig)=\{O+\sum_{i=1}^k a_i t_i, a_i\in\R \}$ and $P(*\sig)=\{O+\sum_{i=k+1}^n b_i t_i, b_i\in\R \}$. Without loss of generality, we suppose $O$ is the origin. In particular, we will also use that $y_i|_{P(\sig)}=0$ for all $k+1\leq i\leq n$ and $y_i|_{P(*\sig)}=0$ for all $1\leq i\leq k$. 

The centroid of $\sig$, denoted $\mathfrak{c}(\sig)$, is such that $\mathfrak{c}(\sig) = \sum_{i=1}^k \mathfrak{c}(\sig)_i t_i$ where 
\begin{equation}\label{centroidsig}
    \mathfrak{c}(\sig)_i = \frac{1}{|\sig|}\int_{\sig} y_i \vol_\sig.
\end{equation}
Similarly, the centroid of $*\sig$, denoted $\mathfrak{c}(*\sig)$, is such that $\mathfrak{c}(*\sig) = \sum_{i=k+1}^n \mathfrak{c}(*\sig)_i t_i$ where 
\begin{equation}\label{centroidstarsig}
    \mathfrak{c}(*\sig)_i = \frac{1}{|*\sig|}\int_{*\sig} y_i \vol_{*\sig}.
\end{equation}

\begin{proposition}\label{prop:superconvergence}
    If $\mathfrak{c}(\sig)$ = $\mathfrak{c}(*\sig)$ for all $\sig\in\del_k$ such that $\sig\not\subset\partial\Omega$, then $\overline{\pol}_1\La^k(\mathcal{Z}_h^k)\subset \Kern(\Pi-J)$.
\end{proposition}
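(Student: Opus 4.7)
\smallskip

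\noindent\textbf{Proof proposal.} By Lemma \ref{lem:kernel}, it suffices to verify that for every $u \in \overline{\pol}_1\La^k(\mathcal{Z}_h^k)$ and every $\sig \in \del_k$,
\begin{equation*}
\frac{1}{|\sig|}\int_{\sig} u \;=\; \frac{1}{|*\sig|}\int_{*\sig} \star u.
\end{equation*}
The plan is to split on whether $\sig$ is a boundary simplex or not. If $\sig\subset\partial\Omega$, then $\dc(\sig)\in\overline{\mathcal{Z}}_h^k$, so by definition of $\overline{\pol}_1\La^k(\mathcal{Z}_h^k)$ the form $u$ vanishes identically on $\dc(\sig)$; since both $\sig$ and $*\sig$ are contained in $\dc(\sig)$, both integrals are zero and the identity holds trivially. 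The remaining work is entirely on interior simplices.

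For $\sig\not\subset\partial\Omega$, I would reuse the orthonormal basis $t_1,\ldots,t_n$ and the dual coframe $dy_1,\ldots,dy_n$ introduced in the proof of Lemma \ref{lem:kernel} (so $t_1,\ldots,t_k$ span $P(\sig)$, $t_{k+1},\ldots,t_n$ span $P(*\sig)$, and the origin $O$ is the unique intersection point of $P(\sig)$ and $P(*\sig)$). On $\dc(\sig)$ an element $u\in\pol_1\La^k(\mathcal{Z}_h^k)$ can be written
\begin{equation*}
u \;=\; \sum_{I\in\Sigma(k,n)} a_I(y)\,dy_I, \qquad a_I(y) = c_I + \sum_{j=1}^n c_{I,j}\,y_j,
\end{equation*}
since affine maps are preserved by the change of coordinates from $z$ to $y$. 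Setting $S=(1,\ldots,k)$, the same trace arguments as in Lemma \ref{lem:kernel} give $dy_I|_\sig = 0$ for $I\neq S$ and $dy_{I^*}|_{*\sig}=0$ for $I\neq S$ (with $I^*$ the complementary index), since $y_\ell\equiv 0$ on $\sig$ for $\ell>k$ and $y_\ell\equiv 0$ on $*\sig$ for $\ell\leq k$. Moreover $dy_S|_\sig = \vol_\sig$ and $\star dy_S = dy_{S^*}$ with $dy_{S^*}|_{*\sig}=\vol_{*\sig}$ by \eqref{wedges0}--\eqref{wedges1}.

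Substituting and using $y_j|_\sig=0$ for $j>k$ together with \eqref{centroidsig}, and analogously $y_j|_{*\sig}=0$ for $j\leq k$ together with \eqref{centroidstarsig}, yields
\begin{equation*}
\frac{1}{|\sig|}\int_\sig u \;=\; c_S + \sum_{j=1}^k c_{S,j}\,\mathfrak{c}(\sig)_j, \qquad \frac{1}{|*\sig|}\int_{*\sig}\star u \;=\; c_S + \sum_{j=k+1}^n c_{S,j}\,\mathfrak{c}(*\sig)_j.
\end{equation*}
The last step is the geometric one: the vector $\mathfrak{c}(\sig)$ lies in $P(\sig)$ (after translating so $O=0$), hence has nonzero coordinates only in $y_1,\ldots,y_k$; symmetrically $\mathfrak{c}(*\sig)$ has nonzero coordinates only in $y_{k+1},\ldots,y_n$. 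Since $P(\sig)\perp P(*\sig)$ and they intersect only at $O$, the equality $\mathfrak{c}(\sig)=\mathfrak{c}(*\sig)$ forces both centroids to coincide with $O$, i.e.\ $\mathfrak{c}(\sig)_j=0$ for $1\le j\le k$ and $\mathfrak{c}(*\sig)_j=0$ for $k+1\le j\le n$. Both expressions then collapse to $c_S$, which gives the Lemma \ref{lem:kernel} criterion and completes the proof.

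\smallskip

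\noindent The main obstacle is not computational but notational: cleanly keeping track of which $dy_I$ (and which linear coordinate $y_j$) restricts to zero on $\sig$ versus on $*\sig$, so that only the $I=S$ coefficient and only the ``in-plane'' coordinates survive. Once that bookkeeping is set up, the hypothesis $\mathfrak{c}(\sig)=\mathfrak{c}(*\sig)$ combined with the orthogonality from Definition \ref{defdualmesh} immediately delivers the equality of averages.
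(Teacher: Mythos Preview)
Your proof is correct and follows essentially the same route as the paper: reduce to the criterion of Lemma \ref{lem:kernel}, express $u$ in the adapted orthonormal coframe $dy_1,\ldots,dy_n$, and use the centroid identities \eqref{centroidsig}--\eqref{centroidstarsig} to evaluate the two averages. The paper organizes the computation by splitting $u=u_0+u_1$ with $u_1$ linear and centered at $\mathfrak{c}(\sig)$, whereas you compute both averages directly and deduce from the hypothesis and orthogonality that $\mathfrak{c}(\sig)=\mathfrak{c}(*\sig)=O$; this is the same content, and your version has the small advantage of making the boundary case $\sig\subset\partial\Omega$ and the geometric consequence $\mathfrak{c}(\sig)=O$ explicit.
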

\begin{proof}
    Let $u\in\overline{\pol}_1\La^k(\mathcal{Z}_h^k)$ and $\sig\in\del_k$ such that $\sig\not\subset\partial\Omega$. By Lemma \ref{lem:kernel}, it suffices to check
    \begin{equation}\label{kernelrelation}
    \frac{1}{|\sigma|}\int_{\sigma} u = \frac{1}{|*\sigma|}\int_{*\sigma}\star u.
\end{equation}
We begin by noting that $u$ can be written as $u = u_0 + u_1$ where 
\begin{equation*}
    u_0 = \sum_{I\in\Sigma(k,n)} a_I dy_I, \qquad u_1 = \sum_{I\in\Sigma(k,n)} b_I dy_I,
\end{equation*}
and each $a_I\in\R$ while each $b_I$ is a homogeneous linear function such that $b_I(\mathfrak{c}(\sig)) = 0$.  In particular, we can then write $b_I= \sum_{i=1}^n \beta_{i}^I (y_i- \mathfrak{c}(\sig)_i)$ where each $\beta_i^I\in\R$ for any $I\in\Sigma(k,n)$.

Now, by Lemma \ref{lem:kernel}, we know that $u_0$ satisfies \eqref{kernelrelation}. It remains to consider $u_1$. Letting $S=(1,\dots,k)$, we can compute
\begin{equation*}
    \int_\sig u_1 = \int_\sig b_S\vol_\sig = \sum_{i=1}^n \beta_{i}^S\int_\sig (y_i- \mathfrak{c}(\sig)_i)\vol_\sig = \sum_{i=1}^k \beta_{i}^S (|\sig| \mathfrak{c}(\sig)_i - |\sig| \mathfrak{c}(\sig)_i) = 0,
\end{equation*}
where we used \eqref{centroidsig}. Similarly, letting $S^* = (k+1, \dots, n)$, we can compute
\begin{equation*}
    \int_{*\sig} \star u_1 = \int_{*\sig} b_{S^*} \vol_{*\sig} = \sum_{i=1}^n \beta_{i}^{S^*}\int_{*\sig} (y_i- \mathfrak{c}(*\sig)_i)\vol_{*\sig} = \sum_{i=k+1}^n \beta_{i}^{S^*} (|*\sig| \mathfrak{c}(*\sig)_i - |*\sig| \mathfrak{c}(*\sig)_i) = 0,
\end{equation*}
where we used \eqref{centroidstarsig} and our assumption that $\mathfrak{c}(\sig) = \mathfrak{c}(*\sig)$.
\end{proof}
\section{Numerical Experiments}\label{sec:numerics}
In this section, we detail numerical experiments showcasing the optimal convergence rate as well as superconvergence. We run all simulations in dimension $n=2$ for all the Hodge-Laplacians corresponding to $k\in\{0,1,2\}$. Our simulations are all run on well-centered meshes. We consider two meshes of the same domain, $\Omega$, which is the equilateral triangle with vertices $(0,0)$, $(1,0)$, and $(1/2, \sqrt{3}/2)$. One mesh consists of all equilateral triangles, while the other is a random perturbation of the vertices of the equilateral mesh done in such a way as to maintain well-centeredness. We display the two meshes in Figure \ref{fig:numerical_mesh}. 

\begin{figure}
\centering
\begin{subfigure}[b]{0.45\textwidth}
\centering
\includegraphics[width=0.8\textwidth]{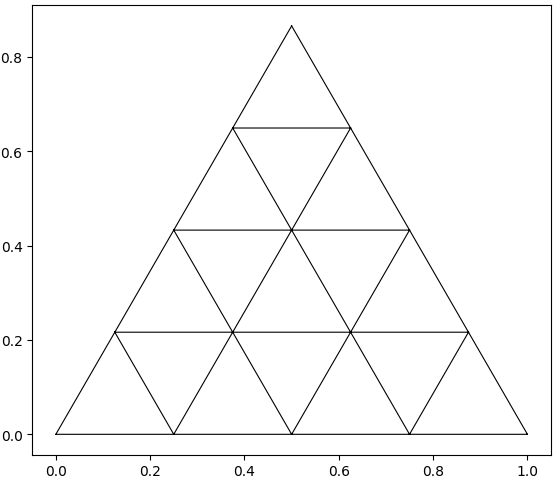}
\caption{Symmetric mesh}
\end{subfigure}
\hfill
\begin{subfigure}[b]{0.45\textwidth}
\centering
\includegraphics[width=0.8\textwidth]{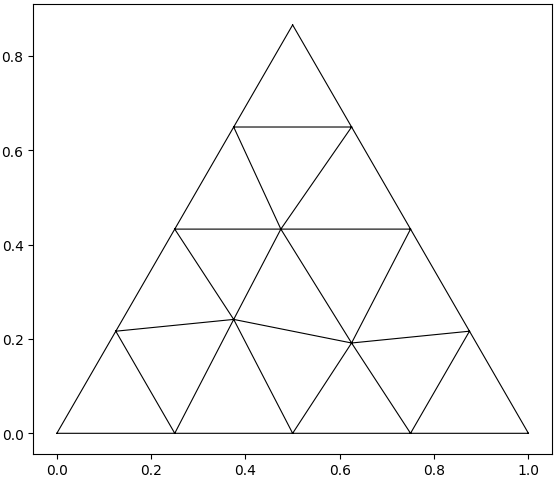}
    \caption{Perturbed mesh}
\end{subfigure}
\caption{The two meshes we use for the numerical experiments.}
\label{fig:numerical_mesh}
\end{figure}

For $k=0$ and $k=2$, we use a manufactured solution $u = 10^8 (\lambda_1 \lambda_2 \lambda_3)^5$ where $\lambda_1$, $\lambda_2$, and $\lambda_3$ are the barycentric coordinates associated to the triangle $\Omega$. For $k=1$, we choose the manufactured solution $\bu$ to be the vector with both components being $u$. Note that the manufactured solution is smooth and satisfies the conditions in Corollary \ref{corollarysuper}. Error tables are displayed in Figures \ref{fig:k0_convergence}-\ref{fig:k2_convergence}. All code is publicly available at \url{https://github.com/pratyushpotu/DEC_convergence_tests}.

Note that the symmetric mesh does indeed satisfy the assumption in Proposition \ref{prop:superconvergence} for all $k\in\{0,1,2\}$. This is easily seen for $\sig\in\del_2$ as if $\sig$ is an equilateral triangle, then $*\sig$ is the circumcenter of $\sig$, and on an equilateral triangle, the centroid is exactly the circumcenter. Moreover, on the same mesh we have $\mathfrak{c}(\sig)$ = $\mathfrak{c}(*\sig)$ for all $\sig\in\del_1$ such that $\sig\not\subset\partial\Omega$. One can see this as follows. Let $\sig\in\del_1$ be an edge. Then, if $\tau_1$ and $\tau_2$ are equilateral triangles which both contain $\sig$, the line between the centroids of $\tau_1$ and $\tau_2$ (which is exactly $*\sig$ in this case) has a midpoint at the midpoint of $\sig$. Similarly, on the same mesh we have $\mathfrak{c}(\sig)$ = $\mathfrak{c}(*\sig)$ for all $\sig\in\del_0$ such that $\sig\notin\partial\Omega$. This is because $*\sig$ is a regular hexagon centered at $\sig$ for all such interior $\sig\in\del_0$. Therefore, the lines of symmetry of $*\sig$ all pass through $\sig$ and hence $\sig$ must be the centroid of $*\sig$.

We now discuss the results presented in the error tables.

{\bf Perturbed Mesh:}
For all cases of $k\in\{0,1,2\}$, we see that the errors $\dn{e_u}$, $\dn{de_u}$, $\dn{e_\rho}$, and $\dn{de_\rho}$ (when they are defined) converge as predicted by Corollary \ref{errorestimate}. In particular, our numerical results for $\dn{de_u}$ follow our error estimate exactly. This is also seen in $\dn{e_u}$ when $k=1$ and $k=2$. When $k=0$, $\dn{e_u}$ converges at a rate of $\mathcal{O}(h^2)$. It is possible this higher order convergence could be explained via a duality argument as is standard in finite element methods, but we do not explore this here. When $k=2$, $\dn{e_\rho}$ also converges at the rate predicted by Corollary \ref{errorestimate} while we see higher order convergence when $k=1$. Finally, the error $\dn{de_\rho}$ when $k=1$ follows our estimate exactly. Note that when $k=2$, $de_\rho = 0$, and so we do not display this.

{\bf Symmetric Mesh:}
We observe superconvergence for all cases of $k\in\{0,1,2\}$ on the symmetric mesh. In particular, when $k=0$, the error $\dn{de_u}$ is superconvergent, when $k=1$ all measured errors are superconvergent, and similarly for $k=2$. Our superconvergent error estimate in Corollary \ref{corollarysuper} predicts $\mathcal{O}(h^2)$ convergence on this mesh which is observed for $\dn{e_u}$ and $\dn{de_u}$ in all cases where they are defined, while the errors $\dn{e_\rho}$ and $\dn{de_\rho}$ observe $\mathcal{O}(h^4)$ convergence. We do not have theoretical justification for this latter phenomenon.  

\begin{figure}[htbp]
  \centering
  \begin{subfigure}[b]{0.45\textwidth}
    \centering
    \includegraphics[width=\textwidth]{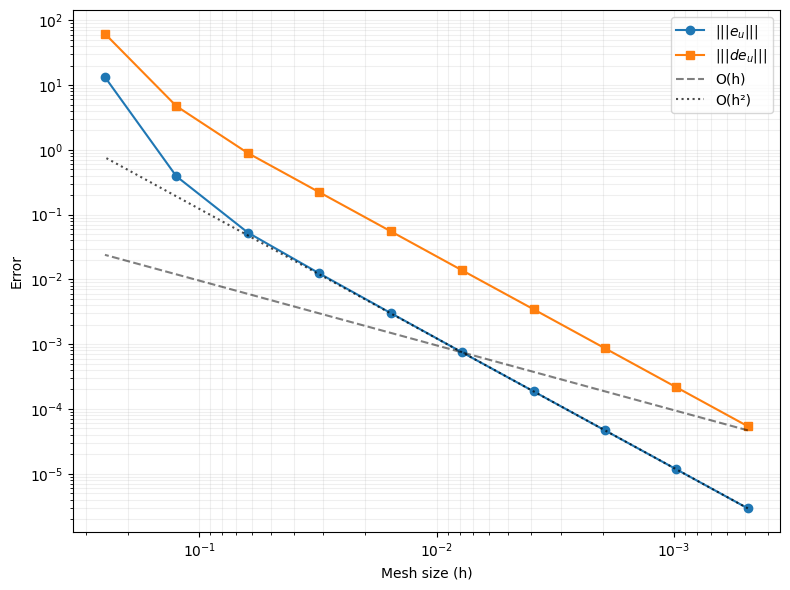}
    \caption{Symmetric mesh}
    \label{subfig:k0_symmetric}
  \end{subfigure}
  \hfill 
  \begin{subfigure}[b]{0.45\textwidth}
    \centering
    \includegraphics[width=\textwidth]{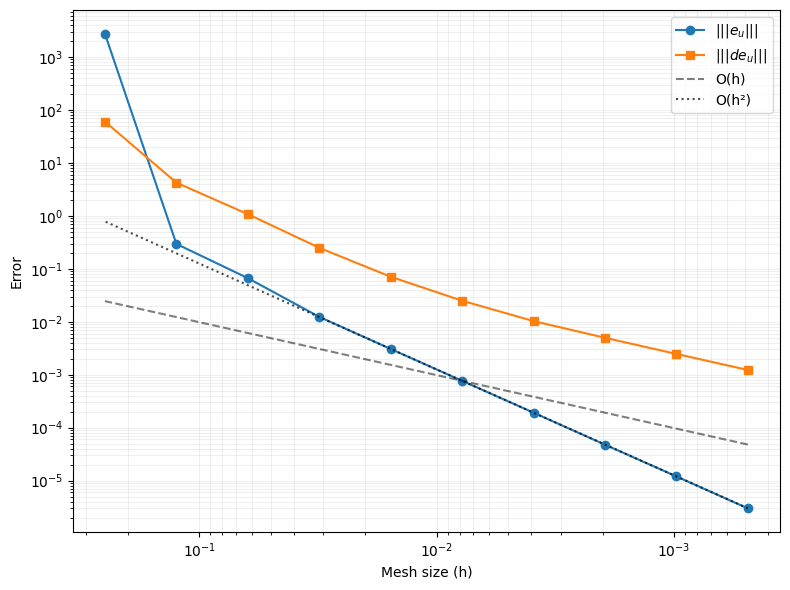}
    \caption{Perturbed mesh}
    \label{subfig:k0_perturbed}
  \end{subfigure}
  \caption{Convergence of the DEC scheme for the $k=0$ Hodge-Laplace problem}
  \label{fig:k0_convergence}
\end{figure}

\begin{figure}[htbp]
  \centering
  \begin{subfigure}[b]{0.45\textwidth}
    \centering
    \includegraphics[width=\textwidth]{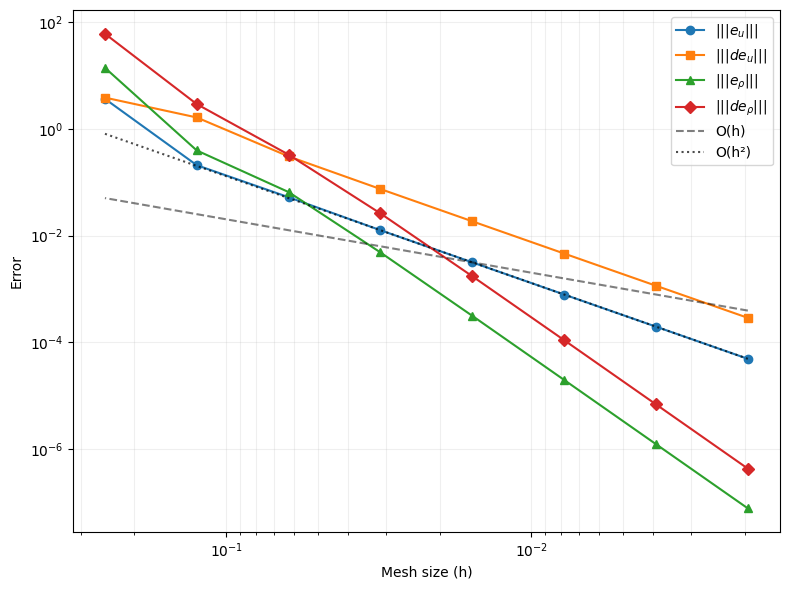}
    \caption{Symmetric mesh}
    \label{subfig:k1_symmetric}
  \end{subfigure}
  \hfill
  \begin{subfigure}[b]{0.45\textwidth}
    \centering
    \includegraphics[width=\textwidth]{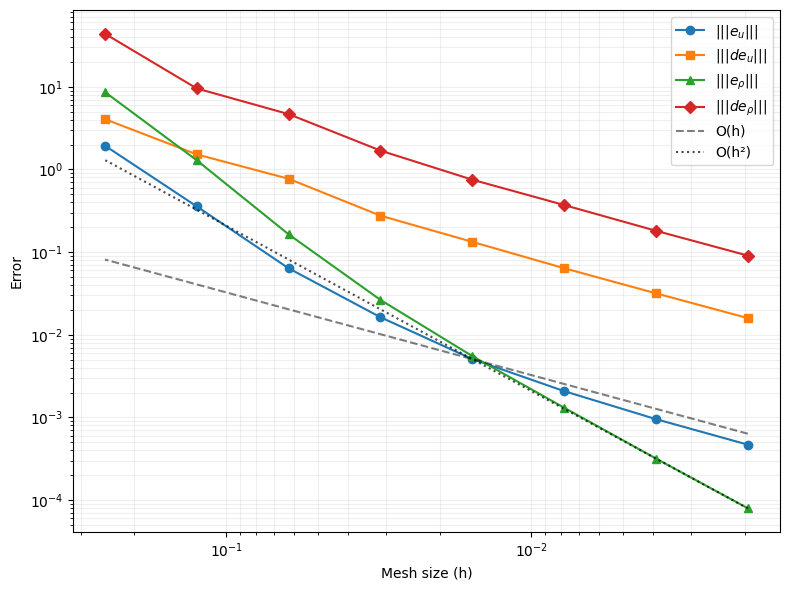}
    \caption{Perturbed mesh}
    \label{subfig:k1_perturbed}
  \end{subfigure}
  
  \caption{Convergence of the DEC scheme the $k=1$ Hodge-Laplace problem}
  \label{fig:k1_convergence}
\end{figure}

\begin{figure}[htbp]
  \centering
  \begin{subfigure}[b]{0.45\textwidth}
    \centering
    \includegraphics[width=\textwidth]{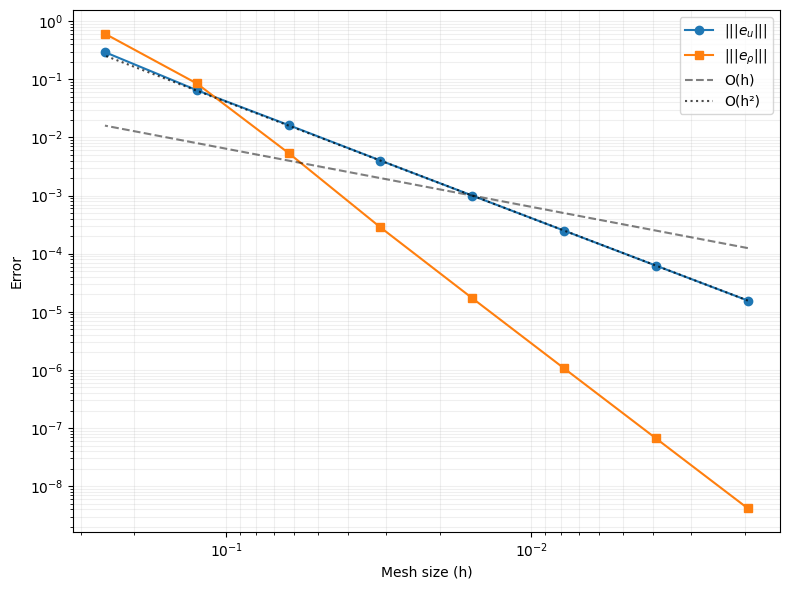}
    \caption{Symmetric mesh}
    \label{subfig:k2_symmetric}
  \end{subfigure}
  \hfill 
  \begin{subfigure}[b]{0.45\textwidth}
    \centering
    \includegraphics[width=\textwidth]{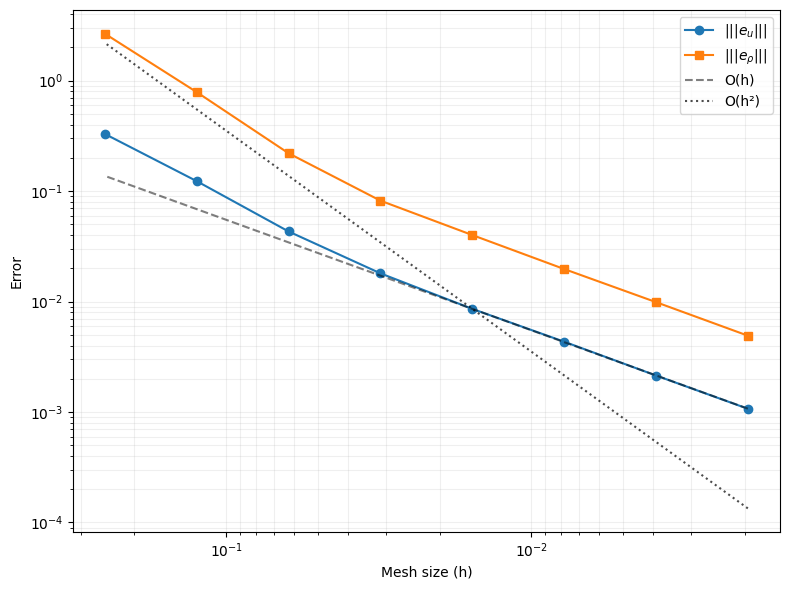}
    \caption{Perturbed mesh}
    \label{subfig:k2_perturbed}
  \end{subfigure}
  \caption{Convergence of the DEC scheme for the $k=2$ Hodge-Laplace problem}
  \label{fig:k2_convergence}
\end{figure}

\section*{Acknowledgments}
We would like to thank the anonymous referee who gave us an idea to greatly simplify the stability analysis and which allowed  us to analyze the problem with harmonic forms.
We would also like to thank Christopher Eldred for showing us the preprint \cite{Kreeft2011} where the authors define an operator similar to the $J$ operator introduced in this work in that it commutes with the discrete codifferential on finite dimensional spaces of differential forms. The setting of \cite{Kreeft2011} is based on the work \cite{BochevHyman2006} and is noticeably different from that considered in this work.

\bibliographystyle{acm_mod}
\bibliography{references}

\appendix
\section{Construction of Generalized Whitney forms}\label{GenWhitneyConstruction}
We follow the work of Christiansen (\cite{Christiansen08, Christiansen09} and \cite[Section 5]{Christiansen11}) to define finite dimensional spaces of $k$-forms satisfying the properties in Proposition \ref{GeneralizedWhitneyForms}. 

Let $T \in \Delta^{\smp}$. If $F \in \Delta_s^{\smp}(T)$ for $s \ge k$, we let $d_F^k$ be the $k$-th exterior derivative on $F$ and we let $\delta_{F,k+1}$ be its adjoint. The $k$-th Hodge-Laplace operator on $F$ is given by
\begin{equation*}
L_F^k:= d_F^{k-1} \delta_{F, k} +\delta_{F, k+1} d_F^k.
\end{equation*}

The $k$-th $\mathfrak{B}^*$ problem on $F$ is given data $\eta$ and $g$ one solves.
\begin{alignat}{2}
L_F^k w =& \eta \qquad && \text{ on } F, \label{app1}\\
\delta_{F, k} w=&0  \qquad  && \text{ on } F, \label{app2}\\
\tr_{\partial F} w= & g,   \qquad   && \text{ on } \partial F.\label{app3}
\end{alignat}
Note that \eqref{app1} becomes $\delta_{F, k+1} d_F^k w =\eta$. In particular, given $L^2$ data this problem has a unique solution for $k+1 \le s \le n$. In the case $s=k$,  $\delta_{F, k} w=0$ implies that $w$ is a multiple of the volume form $\vol_F$.

Now we define the space of the kernel of the $\mathfrak{B}^*$ problem on each face $F$ of dimension $k$ or larger of $T$. This is,
\begin{equation*}
 W_h^k(T):=\{ v \in L^2 \Lambda^k(T) : \delta_{F, k} \tr_F v=0, L_F^k \tr_F v=0, \forall F \in \Delta_s^{\smp}(T), k \le s \le n\}. \end{equation*}
Note that if $T$ is a $k$-simplex then $W_h^k(T)$  is the space of constant $k$ forms defined on $T$ since the kernel of $\delta_{F, k}$ are exactly constants forms. 

Then, we see that $v \in W_h^k(T)$, and since $\delta_{f, k} \tr_f v=0$ for all $f \in \Delta_k^{\smp}(T)$, $\tr_f v$ is a multiple of $\vol_f$ if $f \in \Delta_k^{\smp}(T)$.  Moreover, we can show that   $v \in   W_h^k(T)$ is uniquely determined by 
\begin{equation*}
    \int_f \text{tr}_f v \qquad \forall f \in \Delta_k^{\smp}(T).
\end{equation*}
Indeed, suppose that $\tr_F v$ is uniquely determined by the above dofs for every $F \in \Delta_i^{\smp}(T)$ with  $k \le i\le s-1$. Let $G \in \Delta_{s}^{\smp}$. Then $w=\tr_G v$ solves
\begin{alignat*}{2}
L_G^k w =& 0 \qquad &&\text{ on } G, \\
\delta_G^k w =&0  \qquad  &&\text{ on } G, \\
\tr_{\partial G} w= & \tr_{\partial G} v ,   \qquad   &&\text{ on } \partial G.
\end{alignat*}
Since there is a unique solution $w$, it must be uniquely determined by  $\tr_{\partial G} v$ which by our hypothesis is determined by the dofs.

We next show that the following sequence is a complex.
\begin{alignat}{4}\label{cellcomplexT}
&\mathbb{R}
\stackrel{\subset}{\xrightarrow{\hspace*{0.5cm}}}\
W_h^0(T)
&&\stackrel{d^0}{\xrightarrow{\hspace*{0.5cm}}}\
W_h^1(T)
&&\stackrel{d^1}{\xrightarrow{\hspace*{0.5cm}}}\
\cdots
&&\stackrel{d^{n-2}}{\xrightarrow{\hspace*{0.5cm}}}\
W_h^{n-1}(T)
\stackrel{d^{n-1}}{\xrightarrow{\hspace*{0.5cm}}}\
W_h^{n}(T).
\end{alignat}
 
Suppose that $v \in W_h^k(T)$ and let $w=d^k v$. Let $F \in \Delta_s^{\smp}(T)$ for $k+1 \le s \le n$. Then, since $0=L_F^k \tr_F v=\delta_{F, k+1} d_F^k \tr_F v$,  we have that $\delta_{F, k+1} \tr_F w=0$. Moreover, $L_F^{k+1} \tr_F w= \delta_{F, k+2} d_F^{k+1} \tr_F w= \delta_{F, k+2} d_F^{k+1} d_F^k \tr_F v=0$. Hence, we have that $w \in W_h^{k+1}(T)$.

One defines the global space as
\begin{equation*}
W_h^k:= \{ v \in H \Lambda^k(\Omega): v|_T \in W_h^k(T), \forall T \in \delC_k\}.    
\end{equation*}
We then see that $w \in W_h^k$ is uniquely defined by the following dofs
\begin{equation*}
    \int_f \tr_f w  \qquad \forall f \in \delC_k.
\end{equation*}

\end{document}